\theoremstyle{definition}
\newtheorem{definition}{Definition}[section]
\theoremstyle{plain}
\newtheorem{lemma}[definition]{Lemma}
\newtheorem{theorem}[definition]{Theorem}
\newtheorem{proposition}[definition]{Proposition}
\newtheorem{corollary}[definition]{Corollary}
\theoremstyle{remark}
\newtheorem{remark}[definition]{Remark}
\newcommand{\mycl}{\operatorname{cl}}
\newcommand{\myint}{\operatorname{int}}
\newcommand{\myrank}{\operatorname{rank}}
\newcommand{\myIso}{\operatorname{iso}}
\newcommand{\myLpt}{\operatorname{lpt}}
\newcommand{\myedim}{\operatorname{eRank}}
\newcommand{\mylns}{\operatorname{lns}}
\begin{document}
\title[Michael's selection theorem]{Michael's selection theorem in general d-minimal structures}
\author[M. Fujita]{Masato Fujita}
\address{Department of Liberal Arts,
	Japan Coast Guard Academy,
	5-1 Wakaba-cho, Kure, Hiroshima 737-8512, Japan}
\email{fujita.masato.p34@kyoto-u.jp}

\begin{abstract}
	Thamrongthanyalak demonstrated a definable version of Michael's selection theorem  in d-minimal expansions of the real field.
	We generalize this result to the case in which the structures are d-minimal expansions of ordered fields $\mathcal F=(F,<,+,\cdot,0,1,\ldots)$.
	We also show that we can choose a definable continuous selection $f$ of a lower semi-continuous map $T:E \rightrightarrows F$ so that $f(x)$ is contained in the interior of $T(x)$ when the interior is not empty.
\end{abstract}

\subjclass[2020]{Primary 03C64; Secondary 26B05}

\keywords{d-minimality; Michael's selection theorem}

\maketitle

\section{Introduction}\label{sec:intro}
Thamrongthanyalak proved a definable version of famous Michael's selection theorem \cite{Michael1,Michael2} in d-minimal expansions of the real field \cite{T}.
He posed an open question whether his definable Michael's selection theorem holds in d-minimal expansions of arbitrary ordered fields \cite[3.6]{T}.
We give an affirmative answer to this question in this paper.
In the proof, we heavily use Fornasiero's result on d-minimal expansions of ordered fields \cite{Fornasiero} and  the author's result on extended rank \cite{Fuji_dmin_quotient}.

Let $T$ be a lower semi-continuous set-valued definable map from a definable set $E$ to $F^m$, where $F$ is the universe of the given d-minimal expansion of an ordered field.
Suppose that $T(x)$ is closed and convex for each $x \in E$.
Our definable Michael's selection theorem asserts that we can choose a definable continuous map $f:E \to F^m$ so that $f(x) \in T(x)$ for each $x \in E$.
This is the first main theorem of this paper.
As a next step, we consider the following natural question:
\begin{quotation}
	Can we choose $f$ so that $f(x)$ is contained in the interior of $T(x)$ when $T(x)$ has a nonempty interior in $F^m$?
\end{quotation}
Thamrongthanyalak's proof of definable Michael's selection theorem and our generalization do not guarantee that $f(x)$ is contained in the interior of $T(x)$.
We have succeeded to construct such an $f$ when $m=1$.
This is the second main theorem of this paper.

This paper is organized as follows:
In Section \ref{sec:preliminary}, we recall the definition of d-minimal structures and previous studies on d-minimal structures.
We also prove several basic assertions on d-minimal structures in this section.
Section \ref{sec:michael} is devoted to the proof of definable Michael's selection theorem in an arbitrary d-minimal expansion of an ordered field.
We consider the problem whether we can choose $f$ so that $f(x)$ is contained in the interior of $T(x)$ in Section \ref{sec:improvement}.
The partial result introduced above is proved in this section.
Section \ref{sec:conclusion} gives several concluding remarks.

We introduce the terms and notations used in this paper. 
Throughout, the term ‘definable’ means ‘definable in the given structure with parameters.’
For any expansion of a linear order whose universe is $F$, we assume that $F$ is equipped with the order topology induced from the linear order $<$ and the topology on $F^n$ is the product topology of the order topology on $F$. 
We put $|x|=\max\{|x_i|\;|\; 1 \leq i \leq n\}$ for each $x=(x_1,\ldots, x_n) \in F^n$.
For a topological space $X$, $\myint_{X}(A)$, $\mycl_{X}(A)$ and $\partial_{X}(A)$ denotes the interior, the closure and the frontier of a subset $A$ of $X$, respectively.
We drop the subscript $X$ when the set $X$ is obvious from the context.

\section{Preliminary}\label{sec:preliminary}
This section is a preliminary section.
We first recall the notion of definable completeness and definable Tietze extension theorem.

\begin{definition}[\cite{M}]
	An expansion of a dense linear order without endpoints $\mathcal F=(F,<,\ldots)$ 
	is \textit{definably complete} if every definable subset of $F$ has both a supremum and an infimum in 
	$F \cup \{ \pm \infty\}$.
\end{definition}

\begin{theorem}[Definable Tietze extension theorem]\label{thm:bounded_tietze}
	Consider a definably complete expansion of an ordered field $\mathcal F=(F,<,+,\cdot,0,1,\ldots)$.
	Let $X$ be a definable subset of $F^n$ and $A$ be a definable closed subset of $X$.
	Then, a definable bounded continuous function $f:A \to F$  has a definable bounded continuous extension $g:X \to F$.
	We can drop the assumption that $f$ is bounded when $X=F^n$.
\end{theorem}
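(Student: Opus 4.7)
The plan is to follow the classical route via Urysohn's lemma, adapted to the definable setting. For a nonempty definable $C \subseteq F^n$, the distance function $d(x, C) := \inf \{ |x - y| : y \in C \}$ is well defined by definable completeness, is definable, continuous (in fact $1$-Lipschitz for the max norm $|\cdot|$), and vanishes exactly on $\mycl(C)$ in $F^n$. If $C_0, C_1$ are disjoint subsets of $X$ that are closed in the subspace topology, then $C_i = \mycl(C_i) \cap X$, so $d(x, C_0) + d(x, C_1) > 0$ on $X$, and the ratio
\[
u(x) := \frac{d(x, C_0)}{d(x, C_0) + d(x, C_1)}
\]
defines a definable continuous function $X \to [0, 1]$ equal to $0$ on $C_0$ and $1$ on $C_1$. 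This is the definable Urysohn lemma on $X$.

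With Urysohn in hand, the bounded case will follow the standard iterative Tietze construction. Given $f : A \to [-M, M]$ bounded continuous with $A$ closed in $X$, the sets $\{a \in A : f(a) \ge M/3\}$ and $\{a \in A : f(a) \le -M/3\}$ are closed in $X$ and disjoint, so Urysohn yields a definable continuous $u_0 : X \to [-M/3, M/3]$ with $|f - u_0| \le 2M/3$ on $A$. Iterating the step with $M$ replaced by $2M/3$ and $f$ by $f - u_0$ produces a sequence $(u_n)$ of definable continuous functions on $X$ satisfying $|u_n| \le M (2/3)^n / 3$ on $X$ and $\bigl| f - \sum_{k \le n} u_k \bigr| \le M (2/3)^{n+1}$ on $A$.

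The main obstacle is defining the pointwise sum $g(x) := \sum_{n \ge 0} u_n(x)$ as a \emph{definable} continuous function, because in a general definably complete structure $\mathbb{N}$ need not be definable, and there is no automatic $\omega$-completeness allowing us to take pointwise limits of definable sequences. The remedy is to replace the recursion by a direct formula: unwinding the Urysohn step uniformly, one can express $g(x)$ as a supremum over $a \in A$ of a single definable expression in $f(a)$ together with distances from $x$, the supremum existing by definable completeness. Continuity of $g$ and the identity $g|_A = f$ can then be checked directly from the uniform error estimate and the continuity of $d(\cdot, A)$.

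Finally, for the unbounded case with $X = F^n$, I would use the definable order-preserving homeomorphism $\phi : F \to (-1, 1)$ given by $\phi(t) = t / (1 + |t|)$. Apply the bounded case to $\phi \circ f : A \to (-1, 1)$ to obtain a definable continuous extension $\tilde g : F^n \to [-1, 1]$. The set $B := \tilde g^{-1}(\{-1, 1\})$ is closed and disjoint from $A$, so Urysohn provides a definable continuous $v : F^n \to [0, 1]$ with $v \equiv 1$ on $A$ and $v \equiv 0$ on $B$. Then $g := \phi^{-1}(v \cdot \tilde g)$ is well defined on all of $F^n$, definable, continuous, and restricts to $f$ on $A$, giving the required unbounded extension.
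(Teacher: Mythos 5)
The paper does not prove this theorem itself; it cites \cite{Fuji_dmin_quotient} and \cite{AF}, so your proposal has to stand on its own. Your definable Urysohn lemma via distance functions is fine, and the reduction of the unbounded case ($X=F^n$) to the bounded case via $\phi(t)=t/(1+|t|)$ and a Urysohn function killing $\tilde g^{-1}(\{-1,1\})$ is the standard, correct argument. The problem is the bounded case, which is where the entire content of the theorem lies.

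You correctly identify the obstacle --- in a general definably complete ordered field one cannot form $\sum_{n\ge 0} u_n$ --- but the proposed remedy is not a proof. The sentence ``unwinding the Urysohn step uniformly, one can express $g(x)$ as a supremum over $a\in A$ of a single definable expression'' is precisely the step that needs to be carried out, and it does not follow from the iteration you set up. The stage-$n$ function $u_n$ is a Urysohn function for the pair of closed sets $\{a\in A: (f-\sum_{k<n}u_k)(a)\ge M(2/3)^n/3\}$ and $\{a\in A: (f-\sum_{k<n}u_k)(a)\le -M(2/3)^n/3\}$, which are defined recursively through all earlier stages; there is no evident closed-form collapse of this recursion into one first-order formula. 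Worse, $\mathbb N$ need not be a definable subset of $F$, so $(u_n)_{n}$ is not even a definable family, and definable completeness gives suprema only of definable sets. The way this is actually handled in the literature (and in the sources the paper cites) is to abandon the iteration entirely and write down a single explicit extension formula, e.g.\ a Hausdorff-type formula such as
\[
g(x)=\inf\Bigl\{\, f(a)\cdot \frac{|x-a|}{d(x,A)} \;:\; a\in A \,\Bigr\}
\]
for $x\in X\setminus A$ (after normalizing $f$ to take values in $[1,2]$), whose infimum exists by definable completeness and whose continuity is checked directly. Until you either exhibit such a formula or otherwise justify the ``single definable expression,'' the bounded case --- and hence the theorem --- is not proved.
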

\begin{proof}
	See \cite[Theorem 2.5]{Fuji_dmin_quotient} and \cite[Lemma 6.6]{AF}.
\end{proof}

We next recall the notion of d-minimality.
The notion of d-minimality was first proposed by Miller \cite{Miller-dmin}.
Miller only considered the case in which the universe is the set of reals $\mathbb R$.
The following general definition is given by Fornasiero.

\begin{definition}[\cite{Fornasiero}]
	An expansion of a dense linear order without endpoints $\mathcal F=(F,<,\ldots)$ 
	is \textit{d-minimal} if it is definably complete, and every definable subset $X$ of $F$ is the union of an open set and finitely many discrete sets, where the number of discrete sets does not depend on the parameters of definition of $X$.
\end{definition}

It is already known that several expansions of ordered fields fall into the class of d-minimal structures \cite{D2,FM,MT}.
In addition, every definably complete locally o-minimal structure \cite{TV} is d-minimal.

We collect the assertions in the previous studies.
\begin{lemma}\label{lem:cont_dmin}
	Consider a d-minimal expansion of an ordered field.
	Let $U$ be a definable subset of $F^m$ and $f:U \to F^n$ be a definable map.
	Then the set of points at which $f$ is discontinuous has an empty interior in $F^m$.
	In particular, if $U$ has a nonempty interior in $F^m$, there is a nonempty open box $V$ contained in $U$ such that the restriction of $f$ to $V$ is continuous.
\end{lemma}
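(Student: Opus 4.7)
The two clauses are closely related: the first follows from the second (the ``local continuity clause''), since otherwise the discontinuity set $D$ would contain an open box, and applying the local continuity clause to that box would produce a sub-box on which $f$ is continuous---impossible. So I focus on proving the local continuity clause: every nonempty open box $W \subseteq U$ contains an open sub-box $V$ on which $f|_V$ is continuous.

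My strategy is a double induction on $n$ and $m$. Reducing to $n=1$ is straightforward iterative shrinking: given an open box $W \subseteq U$, I successively find sub-boxes $W \supseteq V_1 \supseteq V_2 \supseteq \cdots$ on which $f_1, f_2, \ldots$ are continuous, and the final $V_n$ works for $f=(f_1,\ldots,f_n)$. Reducing to $m=1$ is more delicate, since applying the $m=1$ case fiber-wise yields only separate, not joint, continuity. To pass to joint continuity I would invoke Fornasiero's cell-decomposition-type results for d-minimal expansions of ordered fields, which stratify the graph of $f$ into finitely many ``large'' strata on which $f$ is continuous, together with auxiliary lower-dimensional pieces; the union of the large strata contains an open sub-box of any given open box.

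The heart of the matter is the base case $m = n = 1$. Given a definable $f: I \to F$ on an open interval, I would consider the oscillation
\[
\omega(x) = \inf_{\delta > 0}\Bigl(\sup_{|y-x|<\delta} f(y) - \inf_{|y-x|<\delta} f(y)\Bigr),
\]
well-defined in $F_{\geq 0} \cup \{+\infty\}$ by definable completeness (Theorem~\ref{thm:bounded_tietze}'s underlying framework), with $\omega(x)=0$ iff $f$ is continuous at $x$. Suppose the discontinuity set $\{\omega>0\}$ contained an open sub-interval $J$. Using d-minimality on $J$ together with thresholding $\omega$ via definable completeness, I would extract a positive $\epsilon$ and an open $J' \subseteq J$ on which $\omega > \epsilon$ throughout. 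This uniform oscillation lower bound would then be incompatible with the d-minimal decomposition of the image $f(J') \subseteq F$ (or of suitable fibers of the graph $\{(x,y)\in J'\times F: y=f(x)\}$), producing the sought contradiction.

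The main obstacle is this base case: converting a uniform oscillation bound into a genuine violation of d-minimality requires careful bookkeeping about how the d-minimal decomposition on the value side interacts with the decomposition on the argument side, since d-minimality a priori gives structure only for subsets of $F$ and not subsets of $F^2$. The separate-to-joint continuity step at $m \geq 2$ is the other subtle point; I would handle it by appealing to Fornasiero's stratification results rather than by a bare-hands inductive argument.
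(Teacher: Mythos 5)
The paper does not actually prove this lemma; it is imported verbatim from Fornasiero's work on d-minimal structures (Theorem 3.10(6) and (12) of his preprint), so there is no internal argument to measure yours against. Judged on its own terms, your proposal is a plan rather than a proof, and it stops exactly at the two points where the real work lies. In the base case $m=n=1$, the contradiction is never derived: you assert that a uniform lower bound $\omega>\epsilon$ on an interval $J'$ ``would be incompatible with the d-minimal decomposition of the image $f(J')$,'' but nothing forces the image of such a function to be a pathological subset of $F$ --- it could perfectly well be an open set, which d-minimality permits, so no contradiction is visible from the image alone. Even the preliminary thresholding step (passing from $\{\omega>0\}\supseteq J$ to a sub-interval on which $\omega\geq\epsilon$) is not a consequence of definable completeness in a possibly non-archimedean field: $J=\bigcup_{\epsilon>0}\{x\in J\;|\;\omega(x)\geq\epsilon\}$ is a definable increasing union of closed sets, and extracting one with nonempty interior is an instance of the definable Baire category property, itself a nontrivial theorem for definably complete expansions of ordered fields. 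That Baire-type input, which is the engine of the actual proof, is absent from your sketch.

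The reduction from $m\geq 2$ to $m=1$ has the same problem in a more acute form: you defer the separate-to-joint continuity issue to ``Fornasiero's cell-decomposition-type results,'' but generic continuity of definable maps is one of the ingredients from which such stratifications are constructed, so invoking a stratification into strata on which $f$ is already continuous is circular. If you are prepared to cite Fornasiero for the stratification, you may as well cite him for the lemma itself, which is precisely what the paper does. To turn your outline into a genuine proof you would need to (i) establish or explicitly invoke the definable Baire property, (ii) carry the one-variable oscillation argument through to an actual contradiction (typically by analyzing the fibers of the closure of the graph, not the image), and (iii) give a non-circular multivariable induction. The reduction to $n=1$ by componentwise shrinking and the derivation of the empty-interior clause from the local-continuity clause are fine, but they are the easy parts.
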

\begin{proof}
	See \cite[Theorem 3.10(6), (12)]{Fornasiero}.
\end{proof}

\begin{lemma}\label{lem:definable_selection}
	Consider a d-minimal expansion of an ordered group whose universe is $F$.
	Let $\pi:F^n \to F^m$ be a coordinate projection and $X$ be a definable subset of $F^n$.
	Then there exists a definable map $\varphi:\pi(X) \to X$ such that the composition $\pi \circ \varphi$ is the identity map on $\pi(X)$.
\end{lemma}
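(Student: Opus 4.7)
The plan is to argue by induction on $n-m$. The base case $n=m$ is trivial: $\pi$ is the identity and $\varphi$ can be taken as the identity map on $X=\pi(X)$. For the inductive step it suffices to handle the case $m=n-1$ with $\pi$ the projection that forgets the last coordinate, because any coordinate projection factors as a composition of such single-coordinate projections and sections compose on the image.

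Assume therefore that $\pi:F^n\to F^{n-1}$. For $y\in\pi(X)$ set $S_y:=\{t\in F:(y,t)\in X\}$, a nonempty definable subset of $F$. It suffices to exhibit a definable $\psi:\pi(X)\to F$ with $\psi(y)\in S_y$ for every $y$ and then take $\varphi(y):=(y,\psi(y))$. I would split $\pi(X)$ into definable pieces according to the shape of $S_y$. On $A:=\{y\in\pi(X):\inf S_y\in S_y\}$ set $\psi(y):=\inf S_y$, which is well-defined and definable by definable completeness. On $B:=\{y\in\pi(X)\setminus A:\sup S_y\in S_y\}$ set $\psi(y):=\sup S_y$. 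The remaining task is to define $\psi$ on the locus $Y^{\ast}:=\pi(X)\setminus(A\cup B)$, where $S_y$ has neither minimum nor maximum.

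For $y\in Y^{\ast}$, d-minimality applied uniformly to the family $\{S_y\}$ decomposes $S_y=U_y\cup D_y^{(1)}\cup\dots\cup D_y^{(k)}$ with $U_y=\myint_F S_y$ open, each $D_y^{(i)}$ discrete, and $k$ independent of $y$. I would further partition $Y^{\ast}$ into $\{y\in Y^{\ast}:U_y\neq\emptyset\}$ and its complement. On the former I would isolate a bounded subinterval of $U_y$ whose endpoints are definable from $y$ (for instance, the endpoints of a specific connected subinterval located via definable completeness applied to the defining formula of $U_y$) and then pick a definable interior point. On the latter $S_y$ is a finite union of discrete definable sets, and using the uniform bound $k$ I would isolate a definable ``most separated'' point of $S_y$, again via definable completeness applied to the separation radius function.

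\textbf{Main obstacle.} The delicate point is making a uniformly definable selection on $Y^{\ast}$. In a d-minimal ordered group a definable open subset of $F$ can have infinitely many connected components, so there is no ``leftmost interval'' to exploit as in the o-minimal setting, and a definable discrete subset of $F$ need not be closed, so extremizing functions need not attain their extrema. Managing this cleanly requires the uniform bound $k$ built into the definition of d-minimality and, most likely, the extended-rank machinery of \cite{Fuji_dmin_quotient} that the author has already flagged in the introduction; in fact, this lemma is essentially the definable Skolem-function statement for d-minimal expansions of ordered groups, so the cleanest route may well be a direct appeal to the corresponding result in \cite{Fornasiero}.
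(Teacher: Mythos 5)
Your reduction to a single-coordinate projection and your treatment of the loci where $\inf S_y$ or $\sup S_y$ is attained are fine, but the entire difficulty of the lemma is concentrated in the remaining case, and there your argument is a sketch that does not go through as stated. On $\{y: U_y\neq\emptyset\}$ there is in general no ``specific connected subinterval'' of $U_y$ that definable completeness hands you: the connected components of a definable open subset of $F$ in a d-minimal structure can accumulate at every point of an infinite discrete set, so there need be no leftmost, no longest, and no component closest to $0$ (and when a closest one exists it need not be unique); ``the endpoints of a specific connected subinterval located via definable completeness applied to the defining formula of $U_y$'' is exactly the selection problem you are trying to solve, pushed down one level. Likewise, on the purely discrete locus the ``most separated'' point is not a well-defined choice: the separation-radius function $u\mapsto\inf\{|u-v|: v\in S_y,\ v\neq u\}$ is positive but its supremum over $S_y$ need not be attained, and when it is attained it is typically attained at infinitely many points (think of a set on which the group acts by translation, where every point has the same separation radius), so no definable tie-breaking is provided. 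You flag this obstacle yourself, which is honest, but flagging it is not the same as resolving it.

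For comparison: the paper does not prove this lemma at all; it cites Miller's unpublished note \cite{Miller-choice}, which is precisely the definable-choice statement for d-minimal expansions of ordered groups that you correctly identify as the content of the lemma. So your instinct in the last sentence --- that the clean route is to invoke the known definable Skolem-function result --- is the route the paper actually takes (via Miller rather than Fornasiero). As a self-contained argument, however, your proposal has a genuine gap at the step where neither endpoint of the fiber is attained, and that is the only step that matters.
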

\begin{proof}
	See \cite{Miller-choice}.
\end{proof}

We use the notion of extended rank defined in \cite{Fuji_dmin_quotient}.
We first recall the definition of dimension and Cantor-Bendixson rank.
\begin{definition}[Dimension]\label{def:dim}
	Consider an expansion of a dense linear order without endpoints.
	Let $F$ be the universe.
	We consider that $F^0$ is a singleton equipped with the trivial topology.
	Let $X$ be a nonempty definable subset of $F^n$.
	The dimension of $X$ is the maximal nonnegative integer $d$ such that $\pi(X)$ has a nonempty interior for some coordinate projection $\pi:F^n \rightarrow F^d$.
	We set $\dim(X)=-\infty$ when $X$ is an empty set.
\end{definition}

\begin{definition}[Cantor-Bendixson rank\cite{FM}]\label{def:lpt}
	We denote the set of isolated points in a topological space $S$ by $\myIso(S)$.
	We set $\myLpt(S):=S \setminus \myIso(S)$.
	In other word, a point $x \in S$ belongs to $\myLpt(S)$ if and only if $x \in \mycl_S(S \setminus \{x\})$.
	
	Let $X$ be a nonempty closed subset of a topological space $S$.
	We set $X\langle 0 \rangle=X$ and, for any $m>0$, we set $X \langle m \rangle = \myLpt(X \langle m-1 \rangle)$.
	We say that $\myrank(X)=m$ if $X \langle m \rangle=\emptyset$ and $X\langle m-1 \rangle \neq \emptyset$.
	We say that $\myrank X = \infty$ when $X \langle m \rangle \neq \emptyset$ for every natural number $m$.
	We set $\myrank(Y):=\myrank(\mycl_S(Y))$ when $Y$ is a nonempty subset of $S$ which is not necessarily closed.
\end{definition}


We have finished the preparation.
We introduce the extended rank function $\myedim$ for d-minimal structures.

\begin{definition}[Extended rank]\label{def:extended_dim_dmin}
	Consider a d-minimal expansion of an ordered field $\mathcal F=(F,<,+,\cdot,0,1,\ldots)$.
	Let $\Pi(n,d)$ be the set of coordinate projections of $F^n$ onto $F^d$.
	Recall that $F^0$ is a singleton.
	We consider that $\Pi(n,0)$ is a singleton whose element is a trivial map onto $F^0$.
	Since $\Pi(n,d)$ is a finite set, we can define a linear order on it. 
	We denote it by $<_{\Pi(n,d)}$.
	Let $\mathcal E_n$ be the set of triples $(d,\pi,r)$ such that $d$ is a nonnegative integer not larger than $n$, $\pi \in \Pi(n,d)$ and $r$ is a positive integer.
	The linear order $<_{\mathcal E_n}$ on $\mathcal E_n$ is the lexicographic order.
	We abbreviate the subscript $\mathcal E_n$ of $<_{\mathcal E_n}$ in the rest of the paper, but it will not confuse readers.
	Let $X$ be a nonempty bounded definable subset of $F^n$.
	The triple $(d,\pi,r)$ is the \textit{extended rank} of $X$ and denoted by  $\myedim_n(X)$ if it is an element of $\mathcal E_n$ satisfying the following conditions:
	\begin{itemize}
		\item $d = \dim X$;
		\item the projection $\pi$ is a largest element in $\Pi(n,d)$ such that $\pi(X)$ has a nonempty interior;
		\item  $r$ is a largest positive integer such that there exists a definable open subset $U$ of $F^d$ contained in $\pi(X)$ such that the set $\pi^{-1}(x) \cap X$ is of dimension zero and the equality $\myrank(\pi^{-1}(x) \cap X) = r$ holds  for each $x \in U$.
	\end{itemize}
	Note that such a positive integer $r$ exists by \cite[Lemma 5.10]{Fornasiero} and \cite[Theorem 3.10(12)]{Fornasiero}.
	We set $\myedim_n(\emptyset)=-\infty$ and define that $-\infty$ is smaller than any element in $\mathcal E_n$.
	
	Let us consider the case in which $X$ is an unbounded definable subset of $F^n$.
	Let $\varphi:F \to (-1,1)$ be the definable homeomorphism given by $\varphi(x)=\frac{x}{\sqrt{1+x^2}}$.
	We define $\varphi_n:F^n \to (-1,1)^n$ by $\varphi_n(x_1,\dots, x_n)=(\varphi(x_1),\ldots, \varphi(x_n))$.
	We set $\myedim_n(X)=\myedim_n(\varphi_n(X))$.
\end{definition}

\begin{remark}\label{rem:induction}
There is no infinite strictly decreasing sequence $e_1 > e_2 > \cdots$ of elements in $\mathcal E_n$.
Here, an infinite strictly decreasing sequence means a map $e : \mathbb N \to \mathcal E_n$ such that $e(m_1)>e(m_2)$ for each $m_1 < m_2 \in \mathbb N$.
\end{remark}
\begin{proof}
Set $e_1=(d,\pi,r)$.
We show the remark by induction on $d$ and $\pi$.
When $d=0$,  the map $\pi:F^n \to F^0$ is the canonical map.
Elements smaller than $(0,\pi,r)$ is of the form $(0,\pi,s)$ with $s<r$.
We have $r-1$ elements in $\mathcal E_n$ smaller than $e_1$.
This implies that there are no infinite strictly decreasing sequence stating from $e_1$.

Let us consider the case in which  $d>0$.
Set $(d_r,\pi_r, r')=e_r$.
Since we have only $r-1$ elements of the form $(d,\pi,s)$ smaller than $e_1$, we have either $d_r<d$ or $d_r=d \wedge \pi_r<\pi$.
By the induction hypothesis, there are no infinite strictly decreasing sequence starting from $e_r$.
This implies that there are no infinite strictly decreasing sequence starting from $e_1$.
\end{proof}

We collect two basic properties of extended rank proved in \cite{Fuji_dmin_quotient}.

\begin{lemma}\label{lem:frontier_extended}
	Consider a d-minimal expansion of an ordered field $\mathcal F=(F,<,+,\cdot,0,1,\ldots)$.
	We have $\myedim_n \partial X < \myedim_n X$ for each nonempty definable subset $X$ of $F^n$.
\end{lemma}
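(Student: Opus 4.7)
The plan is to argue directly from the definition of extended rank by comparing the three lex-components of $\myedim_n X = (d,\pi,r)$ with those of $\myedim_n \partial X$. First, since $\varphi_n$ in Definition~\ref{def:extended_dim_dmin} is a homeomorphism and thus commutes with taking frontier, I reduce to the case where $X$ is bounded. I then rule out $\myedim_n \partial X \ge \myedim_n X$ component by component.

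For the first component, $\partial X \subseteq \mycl X$ together with the d-minimal fact that $\dim \mycl Y = \dim Y$ for every definable $Y$ (which follows from the decomposition into an open set and finitely many discrete sets, combined with Fornasiero's Theorem 3.10) gives $\dim \partial X \le d$; strict inequality finishes the proof, so assume $\dim \partial X = d$. For the second component, let $\pi'$ be the largest projection in $\Pi(n,d)$ with $\pi'(\partial X)$ of non-empty interior. From $\pi'(\partial X) \subseteq \mycl \pi'(X)$ one concludes that $\mycl \pi'(X)$ has non-empty interior; decomposing $\pi'(X)$ as an open set together with finitely many definable discrete sets and invoking the Baire category theorem (valid in definably complete expansions) forces $\pi'(X)$ itself to have non-empty interior. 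Maximality of $\pi$ then yields $\pi' \le \pi$; again, strict inequality finishes the proof, so assume $\pi' = \pi$.

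The remaining task is to show $r' < r$, and this is where the main difficulty lies. By definition there are definable open $U, U' \subseteq F^d$ on which the $\pi$-fibers of $X$ and of $\partial X$ are zero-dimensional of rank $r$ and $r'$ respectively. After shrinking to a common non-empty open $V \subseteq U \cap U'$ using Lemma~\ref{lem:cont_dmin} together with the definable selection Lemma~\ref{lem:definable_selection} to extract a section of $\partial X$ over an open subset of $U$, I pick $x \in V$ and compare $Y_x := \pi^{-1}(x) \cap X$ with $Z_x := \pi^{-1}(x) \cap \partial X$. The delicate issue is that $Z_x$ is in general strictly larger than the fiberwise frontier $\mycl_{\pi^{-1}(x)} Y_x \setminus Y_x$, because a point of $\partial X$ in the fiber $\pi^{-1}(x)$ may be approached by points of $X$ lying in nearby fibers rather than within $\pi^{-1}(x)$ itself. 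Controlling these extra accumulations, so that every point of $\mycl_{\pi^{-1}(x)} Z_x$ still belongs to at least the first Cantor-Bendixson derivative $(\mycl_{\pi^{-1}(x)} Y_x)\langle 1\rangle$, is the main obstacle; the key tools are the generic continuity of fiber structure from Lemma~\ref{lem:cont_dmin} and definable selection, which together realise each such point as a limit of isolated points of $\mycl_{\pi^{-1}(x)} Y_x$, yielding the strict drop $\myrank(\mycl_{\pi^{-1}(x)} Z_x) < \myrank(\mycl_{\pi^{-1}(x)} Y_x) = r$.
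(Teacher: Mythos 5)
The paper does not actually prove this lemma; it cites \cite[Lemma 7.16]{Fuji_dmin_quotient}, so you are attempting a self-contained argument that the paper itself outsources. Your skeleton --- reduce to the bounded case via $\varphi_n$ and compare $\myedim_n\partial X$ with $\myedim_n X$ lexicographically in the three components --- is a sensible plan, and the first two components go through essentially as you say (using $\dim\mycl Y=\dim Y$ for definable $Y$ and the fact that a definable subset of $F^d$ has dimension $d$ iff it has nonempty interior; for the reduction to the bounded case you also need the monotonicity $\myedim_n(A)\le\myedim_n(A\cup B)$ from Lemma \ref{lem:dmin_union}, since $\varphi_n(\partial X)$ is only contained in, not equal to, the frontier of $\varphi_n(X)$ in $F^n$).

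The third component, however, is a genuine gap, and you have in effect flagged it yourself: you identify cross-fiber accumulation as ``the main obstacle'' and then assert, without argument, that Lemmas \ref{lem:cont_dmin} and \ref{lem:definable_selection} ``realise each such point as a limit of isolated points.'' Worse, the intermediate claim you state --- that every point of $\mycl_{\pi^{-1}(x)}Z_x$ lies in $(\mycl_{\pi^{-1}(x)}Y_x)\langle 1\rangle$ --- is false for an individual fiber for exactly the reason you describe: a point of $\partial X$ over $x$ that is approached only through nearby fibers need not lie in $\mycl_{\pi^{-1}(x)}Y_x$ at all, hence certainly not in its first Cantor--Bendixson derivative. (Only the \emph{fiberwise} frontier $\mycl Y_x\setminus Y_x$ is automatically contained in $(\mycl Y_x)\langle 1\rangle$ and so drops rank.) What must actually be shown is that the set of base points $x$ over which cross-fiber accumulation raises the fiber rank is negligible --- i.e., that after discarding a subset of $\pi(X)$ with empty interior, $X$ is locally a finite-rank union of graphs over which $\mycl X$ is computed fiberwise --- and this generic local triviality over $\pi$ is the real content of \cite[Lemma 7.16]{Fuji_dmin_quotient} (compare the use of \cite[Lemma 5.10]{Fornasiero} in Definition \ref{def:extended_dim_dmin} and the argument of Proposition \ref{prop:definable_mfd}). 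There is also a smaller logical slip: $r'$ is attained on some open $U'\subseteq\pi(\partial X)$ on which the generic $X$-fiber rank may be some $s<r$ rather than $r$, so the comparison must be carried out locally on a common open set where both ranks are constant, not against the global $r$ directly. As written, the proof does not close.
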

\begin{proof}
	See \cite[Lemma 7.16]{Fuji_dmin_quotient}.
\end{proof}

\begin{lemma}\label{lem:dmin_union}
	Consider a d-minimal expansion of an ordered field whose universe is $F$.
	Let $A$ and $B$ be definable subsets of $F^n$.
	The equality $\myedim_n(A \cup B)=\max\{\myedim_n(A),\myedim_n(B)\}$ holds.
\end{lemma}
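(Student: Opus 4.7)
The plan is to reduce to the bounded case using the definable homeomorphism $\varphi_n$ (which commutes with unions, so $\varphi_n(A \cup B) = \varphi_n(A) \cup \varphi_n(B)$), and then prove the two inequalities separately.

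For the $\geq$ direction I would first establish a monotonicity principle: if $X \subseteq Y$ are nonempty bounded definable subsets of $F^n$ then $\myedim_n(X) \leq \myedim_n(Y)$. The dimension and projection coordinates follow from $\pi(X) \subseteq \pi(Y)$. For the rank, when $\dim X = \dim Y$ and $\pi_X = \pi_Y = \pi$, take the witnessing open $U_X$ for $r_X$; a fiber-dimension argument shows $\{x \in U_X : \dim(\pi^{-1}(x) \cap Y) \geq 1\}$ has empty interior (otherwise $\dim Y > \dim X$), and on a refinement the zero-dimensional $Y$-fibers contain the $X$-fibers and so have rank $\geq r_X$. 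Fornasiero's Lemma 5.10 together with Theorem 3.10(12), as already invoked in Definition \ref{def:extended_dim_dmin}, then produces an open set on which this rank is constant, yielding $r_Y \geq r_X$. Applied to $A, B \subseteq A \cup B$ this immediately gives $\myedim_n(A \cup B) \geq \max\{\myedim_n(A), \myedim_n(B)\}$.

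For the $\leq$ direction, assume WLOG both $A, B$ are nonempty and $\myedim_n(A) \geq \myedim_n(B)$; write $(d_A, \pi_A, r_A) = \myedim_n(A)$ and $(d, \pi, r) = \myedim_n(A \cup B)$. Dimension additivity gives $d = d_A$. For any $\pi' > \pi_A$ both $\pi'(A)$ and $\pi'(B)$ have dimension strictly less than $d_A$ (for $\pi'(A)$ by maximality of $\pi_A$; for $\pi'(B)$ either because $\dim B < d_A$, or because $\pi' > \pi_A \geq \pi_B$ when $\dim A = \dim B$), so $\pi'(A \cup B)$ has empty interior in $F^{d_A}$ and hence $\pi = \pi_A$. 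For the rank I will use the identity $\myLpt(X_1 \cup X_2) = \myLpt(X_1) \cup \myLpt(X_2)$ valid for closed $X_1, X_2$ (a limit point of the union avoided by $X_1$ on some neighborhood must come from $X_2$), which by induction yields $\myrank(X_1 \cup X_2) = \max(\myrank X_1, \myrank X_2)$. Suppose for contradiction $r > r_A$, witnessed by an open $U$. Then on $U$ both the $A$- and the $B$-fibers are zero-dimensional, and at each $x \in U$ the maximum of their ranks equals $r$. Definably split $U$ into the two sets where the $A$-fiber, respectively the $B$-fiber, has rank $\geq r$; dimension additivity forces one piece to have nonempty interior, and Fornasiero's Lemma 5.10 / Theorem 3.10(12) applied to that piece yields $\myedim_n(A) \geq (d_A, \pi_A, r)$ or $\myedim_n(B) \geq (d_A, \pi_A, r)$, contradicting the choice of $(d_A, \pi_A, r_A)$ in either case.

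The hard part will be the rank step, because the witnessing opens for $r_A$ and $r_B$ need not overlap, so one cannot simply intersect them. The identity $\myLpt(X_1 \cup X_2) = \myLpt(X_1) \cup \myLpt(X_2)$ together with Fornasiero's generic-fiber-rank lemma is what enables the definable splitting argument to isolate, on an open subset, whether the ``extra'' rank of the union-fiber comes from $A$ or from $B$.
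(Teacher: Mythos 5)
Your argument appears to be correct. Note that the paper itself does not prove this lemma; it cites \cite[Proposition 7.19]{Fuji_dmin_quotient}, so there is no internal proof to compare against, and your self-contained argument is a genuine addition. The two pillars of your proof both check out: monotonicity of $\myedim_n$ under inclusion (using that $\myLpt$ is monotone, hence $\myrank$ is monotone on closures, plus the fiber-dimension formula to see that $Y$-fibers over a dense open subset of $U_X$ remain zero-dimensional), and the identity $\myLpt(X_1\cup X_2)=\myLpt(X_1)\cup\myLpt(X_2)$ for closed sets, which propagates through the Cantor--Bendixson iteration to give $\myrank(X_1\cup X_2)=\max\{\myrank X_1,\myrank X_2\}$ and drives the definable splitting of the witnessing open set. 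Three points deserve explicit mention if you write this up. First, your reduction to the bounded case needs the (easy but not stated) invariance $\myedim_n(X)=\myedim_n(\varphi_n(X))$ for bounded $X$, since one of $A,B$ may be bounded while the union is not; this follows because $\varphi_n$ commutes with coordinate projections and restricts to a homeomorphism on fibers. Second, your first coordinate relies on $\dim(A\cup B)=\max\{\dim A,\dim B\}$ and on "a subset of $F^d$ of dimension $d$ has nonempty interior"; these are prior facts about dimension in d-minimal structures from \cite{Fornasiero}, so there is no circularity, but they should be cited rather than called "additivity" in passing. Third, in the splitting step you should pass from "fiber rank $\geq r$ on a set with nonempty interior" to "fiber rank constant on a smaller open set" exactly as in your monotonicity step (finitely many rank values by the uniform bound, then the union property of dimension picks out a piece with interior); as you note, since the union-fiber has rank exactly $r$ the relevant piece in fact has constant rank $r$, so this is immediate, but it is worth saying.
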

\begin{proof}
	See \cite[Proposition 7.19]{Fuji_dmin_quotient}.
\end{proof}

We also need the following proposition and its corollary:
\begin{proposition}\label{prop:definable_mfd}
	Consider a d-minimal expansion of an ordered field whose universe is $F$.
	Let $X$ be a definable subset of $F^n$ and set $(d,\pi,r)=\myedim_n(X)$.
	There exists a definable open subset $U$ of $X$ satisfying the following:
	\begin{enumerate}
		\item[(1)]  The inequality $\myedim_n(X \setminus U)<\myedim_n(X)$ holds;
		\item[(2)] For any $x \in U$, there exists an open box $B$ containing the point $x$ such that the restriction of $\pi$ to $B \cap X$ is a definable homeomorphism onto $\pi(B)$.
	\end{enumerate}
\end{proposition}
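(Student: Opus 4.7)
My strategy is to define $U$ tautologically as the set of $x\in X$ satisfying condition (2), so that (2) holds by construction and the entire task reduces to proving the extended-rank inequality (1). Concretely, let $U$ consist of those $x\in X$ for which there exists an open box $B\subset F^n$ with $x\in B$ and $\pi|_{B\cap X}:B\cap X\to\pi(B)$ a definable homeomorphism. This $U$ is definable because the clause is first-order in $\mathcal F$, and it is open in $X$ because the same witnessing box $B$ serves simultaneously for every $x'\in B\cap X$.

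Before attacking (1), I reduce to the case that $X$ is bounded and closed. Boundedness is achieved via the homeomorphism $\varphi_n$ of Definition \ref{def:extended_dim_dmin}. For closedness, replace $X$ by $\mycl(X)$: by Lemma \ref{lem:frontier_extended} the frontier $\partial X$ has strictly smaller extended rank, so Lemma \ref{lem:dmin_union} gives $\myedim_n(\mycl(X))=\myedim_n(X)$, and a set $U'$ satisfying (1) and (2) for $\mycl(X)$ produces $U=U'\cap X$ for $X$ via the inclusion $X\setminus U\subset\mycl(X)\setminus U'$.

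With $X$ closed and bounded, let $V\subset F^d$ be the largest open subset such that the closed fiber $G_a:=\pi^{-1}(a)\cap X$ has dimension zero and rank $r$ for every $a\in V$. The central claim is that there exists a dense open $V^*\subset V$ such that for every $a\in V^*$ and every $y\in\myIso(G_a)$ one has $(a,y)\in U$. Granting the claim, any triple $(d',\pi',r')$ realized as the extended rank of $X\setminus U$ is strictly smaller than $(d,\pi,r)$ in the lexicographic order on $\mathcal E_n$: the cases $d'<d$ and $d'=d$ with $\pi'<\pi$ are immediate; in the remaining case $d'=d$, $\pi'=\pi$, after passing to a dense open sub-piece where the dimension of the fiber of $X$ is forced to be $0$ (by $\dim X=d$) and its rank is constant, the resulting witness is forced inside $V$ by maximality of $V$, meets $V^*$ by density, and at such a point the fiber of $X\setminus U$ is contained in $\myLpt(G_a)$ which has rank at most $r-1$, forcing $r'\leq r-1$ via monotonicity of rank under inclusion of closed sets.

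The heart of the proof is the central claim. Given $(a,y)$ with $y\in\myIso(G_a)$, choose an open box $B_2\subset F^{n-d}$ with $B_2\cap G_a=\{y\}$, and apply Lemma \ref{lem:definable_selection} to the definable set $\{(a',y')\in V\times F^{n-d}:y'\in B_2\cap G_{a'}\}$ to obtain a definable partial section $\tau$; by Lemma \ref{lem:cont_dmin} there is an open box $B_1\ni a$ on which $\tau$ is continuous, and after shrinking one may assume $\tau(B_1)\subset B_2$. The principal obstacle, and the genuine technical crux, is to show that after further shrinking $B_1$ and $B_2$ and restricting $(a,y)$ to a dense open sublocus, the equality $B_2\cap G_{a'}=\{\tau(a')\}$ holds for every $a'\in B_1$, which is exactly condition (2) realized by $B:=B_1\times B_2$. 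If this fiber-thinning failed on a locus of full extended rank, iterated application of Lemmas \ref{lem:definable_selection} and \ref{lem:cont_dmin} would produce additional continuous sections into $B_2\cap X$; the resulting extra fiber structure, combined with the closedness of $X$ and the maximality of $r$ in the definition of $\myedim_n(X)$, yields a contradiction, best organized by induction along the well-founded order $<_{\mathcal E_n}$ from Remark \ref{rem:induction}.
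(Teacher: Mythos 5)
Your overall skeleton matches the paper's: define $U$ tautologically as the set of points satisfying condition (2), localize over the open set $V$ where the fibers of $X$ have dimension zero and rank $r$, restrict attention to isolated points of fibers, and use Lemma \ref{lem:definable_selection} plus Lemma \ref{lem:cont_dmin} to produce a continuous local section $\tau$. But the step you yourself flag as ``the principal obstacle'' --- showing that after shrinking, $B_2\cap G_{a'}=\{\tau(a')\}$ for all $a'$ in a small box --- is exactly where the proof lives, and your sketch of it (``iterated application \ldots would produce additional continuous sections; the resulting extra fiber structure \ldots yields a contradiction'') does not work as stated. A fiber of rank $r$ can contain infinitely many isolated points (rank is Cantor--Bendixson depth, not cardinality), so producing more and more disjoint continuous sections into $B_2\cap X$ creates no tension with the maximality of $r$; nothing in your argument rules out points of $G_{a'}$ distinct from $\tau(a')$ accumulating into $B_2$ as $a'\to a$. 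The paper closes this gap with a specific device: after first discarding the non-isolated points of fibers (so that all fibers are discrete, at the cost of a set of smaller extended rank), it introduces the definable function $h(x)=\min\{1,\inf\{|g(x)-u|\;|\;g(x)\neq u\in\pi^{-1}(x)\cap X\}\}$ measuring the distance from the selected point to the rest of its fiber. Discreteness gives $h>0$, Lemma \ref{lem:cont_dmin} gives generic continuity of $h$, hence a uniform positive lower bound $\varepsilon$ on a box, and the box $B=B_3\times\{|y-\pi'\circ g(x)|<\varepsilon\}$ then meets $X$ exactly in $g(B_3)$. Some idea of this kind --- a definable, generically continuous, positive ``separation function'' --- is the missing ingredient in your argument.

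Two smaller points. First, your reduction to $X$ closed is not quite right as stated: if $U'$ satisfies (2) for $\mycl(X)$, a box $B$ witnessing that $\pi|_{B\cap\mycl(X)}$ is a homeomorphism onto $\pi(B)$ need not witness (2) for $X$, since $\pi(B\cap X)$ may be a proper subset of $\pi(B)$; you would need to additionally delete $\mycl(\partial X)$ (harmless by Lemmas \ref{lem:frontier_extended} and \ref{lem:dmin_union}, but it must be said). The paper avoids this by never passing to the closure. Second, in deducing (1) from your central claim, the witness open set for the extended rank of $X\setminus U$ is not ``forced inside $V$'' in general; you also need to handle the case where the fibers of $X$ over that witness have rank strictly less than $r$, which in fact yields a contradiction even more directly, but the case split should be made explicit.
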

\begin{proof}
	We reduce to simpler cases step by step.
	We may assume that $\pi$ is the projection onto the first $d$ coordinates without loss of generality.
	We define the definable subset $W$ of $\pi(X)$ as the set of points $x \in \pi(X)$ such that the set $\pi^{-1}(x) \cap X$ is of dimension zero and the equality $\myrank(\pi^{-1}(x) \cap X) = r$ holds.
	Let $V$ be the interior of $W$ in $F^d$, which is not empty by the definition of extended rank.
	The definition of extended rank implies that $\myedim_n(X \setminus \pi^{-1}(V))< \myedim_n(X)$.
	Replacing $X$ with $X \cap \pi^{-1}(V)$, by using Lemma \ref{lem:dmin_union}, we may assume that $\pi(X)$ is open and  $\pi^{-1}(x) \cap X$ is of dimension zero and the equality $\myrank(\pi^{-1}(x) \cap X) = r$ holds for each $x \in \pi(X)$.
	
	Set $Y:=\{x \in X\;|\; x \text{ is isolated in }\pi^{-1}(\pi(x)) \cap X\}$.
	It is obvious that $\myrank(\pi^{-1}(x) \cap (X \setminus Y))<\myrank(\pi^{-1}(x) \cap X)$, and this implies that $\myedim_n(X \setminus Y) < \myedim_n(X)$.
	We also have $\myedim_n(\mycl_X(X \setminus Y))<\myedim_n(X)$ by Lemma \ref{lem:frontier_extended} and Lemma \ref{lem:dmin_union}.
	Replacing $X$ with $\myint_X(Y)$ and using Lemma \ref{lem:dmin_union}, we may assume that $\pi^{-1}(x) \cap X$ is discrete for each $x \in \pi(X)$.
	
	Let $U$ be the set of points $x \in X$ such that the restriction of $\pi$ to $B \cap X$ is a definable homeomorphism onto $\pi(B)$ for some open box $B$ containing the point $x$.
	It is obvious that $U$ is open in $X$.
	We have only to prove that $\myedim_n(X \setminus U)<\myedim_n(X)$.
	Assume for contradiction that $\myedim_n(X \setminus U)=\myedim_n(X)=(d,\pi,r)$.
	Set $Z=X \setminus U$ for simplicity of nation.
	Note that $\pi(Z)$ has a nonempty interior by the assumption.
	We can take a definable map $f:\pi(Z) \to Z$ by Lemma \ref{lem:definable_selection}. 
	There exists an open box $B_1$ contained in $\pi(Z)$ such that the map $f$ restricted to  $B_1$ is continuous by Lemma \ref{lem:cont_dmin}.
	Let $g:B_1 \to Z$ be the restriction of $f$ to $B_1$.
	Let $h:B_1\to F$ be the definable function defined by $h(x)=\min\{1,\inf\{|g(x)-u|\;|\; g(x) \neq u \in \pi^{-1}(x) \cap X\}\}$.
	We always have $h(x)>0$ because $\pi^{-1}(x) \cap X$ is discrete by the assumption.
	We may assume that $h$ is continuous by Lemma \ref{lem:cont_dmin} by shrinking $B_1$ if necessary.
	
	Fix an arbitrary point $x \in B_1$.
	Take $\varepsilon>0$ so that $\varepsilon<h(x)$.
	We can choose a small open box $B_2$ containing the point $x$ and contained in $B_1$ such that $h(x')>\varepsilon$ for each $x' \in B_2$ because $h$ is continuous.
	Since $g$ is continuous, we can take an open box $B_3$ containing the point $x$ and contained in $B_2$ such that $|\pi' \circ g(x)- \pi' \circ g(x')|<\varepsilon$ for each $x' \in B_3$, where $\pi':F^n \to F^{n-d}$ is the coordinate projection forgetting the first $d$ coordinates.
	Set $B=B_3 \times \{y \in F^{n-d}\;|\; |y-\pi' \circ g(x)|<\varepsilon\}$.
	It is obvious that the restriction of $\pi$ to $X \cap B=g(B_3)$ is a definable homeomorphism onto $\pi(B)=B_3$.
	This contradicts to the fact that $g(x)$ belongs to $Z$ and the definition of $Z$.
\end{proof}

\begin{corollary}\label{cor:dmin_cont2}
	Consider a d-minimal expansion of an ordered field whose universe is $F$.
	Let $X$ be a definable subset of $F^n$ and $f:X \to F^m$ be a definable map.
	Let $D(f)$ be the set of points at which $f$ is discontinuous.
	Then the inequality $\myedim_n(D(f))<\myedim_n(X)$ holds.
\end{corollary}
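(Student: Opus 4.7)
The plan is to reason by contradiction. Assuming $\myedim_n(D(f)) \geq \myedim_n(X) =: (d,\pi,r)$, I will apply Proposition \ref{prop:definable_mfd} to $X$ to obtain an open definable subset $U \subseteq X$ with $\myedim_n(X \setminus U) < \myedim_n(X)$ and the local-homeomorphism property for $\pi$, and then derive a contradiction with Lemma \ref{lem:cont_dmin}. Splitting $D(f) = (D(f) \cap U) \cup (D(f) \cap (X \setminus U))$ and using Lemma \ref{lem:dmin_union} (together with its immediate corollary that $\myedim_n$ is monotone under inclusion) reduces the problem to proving $\myedim_n(D(f) \cap U) < (d,\pi,r)$.

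To analyze $(d',\pi',r') := \myedim_n(D(f) \cap U)$, first note that $D(f) \cap U \subseteq X$ forces $d' \leq d$, and when $d' = d$ the maximality of $\pi$ in $\myedim_n(X)$ forces $\pi' \leq \pi$: any $\pi'' \in \Pi(n,d)$ strictly larger than $\pi$ has $\pi''(X)$, and hence $\pi''(D(f) \cap U)$, with empty interior. The only remaining obstruction is therefore $(d',\pi') = (d,\pi)$, i.e.\ $\pi(D(f) \cap U)$ contains a nonempty open $V \subseteq F^d$. In that case, apply Lemma \ref{lem:definable_selection} to obtain a definable section $s : V \to D(f) \cap U$ of $\pi$, and shrink $V$ using Lemma \ref{lem:cont_dmin} so that $s$ becomes continuous. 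Fix $w_0 \in V$, choose by the defining property of $U$ an open box $B$ around $s(w_0)$ on which $\pi|_{B \cap X}$ is a definable homeomorphism onto $\pi(B)$ with inverse $\sigma$, and shrink $V$ once more so that $V \subseteq \pi(B)$ and $s(V) \subseteq B$. Uniqueness of preimages then forces $s(w) = \sigma(w)$ throughout $V$. The composition $g := f \circ \sigma : \pi(B) \to F^m$ is definable, and since $B \cap X$ is open in $X$ and $\sigma$ is a homeomorphism, $g$ fails to be continuous at every point of $V$; hence $V$ lies in the discontinuity set of $g$, contradicting Lemma \ref{lem:cont_dmin}.

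The main obstacle is this last step: translating the combinatorial statement ``$\pi(D(f) \cap U)$ has nonempty interior'' into a genuine open set of discontinuities of a map defined on an open subset of $F^d$. The key is to pair the definable selection $s$ from Lemma \ref{lem:definable_selection} with the local chart inverse $\sigma$ from Proposition \ref{prop:definable_mfd}; continuity of $s$ at one chosen point $w_0$ is exactly what is needed to force $s$ and $\sigma$ to coincide on a neighbourhood of $w_0$, after which Lemma \ref{lem:cont_dmin} applied to $g = f \circ \sigma$ closes the argument.
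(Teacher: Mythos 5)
Your proposal is correct and follows essentially the same route as the paper: apply Proposition \ref{prop:definable_mfd} to get $U$, reduce via Lemma \ref{lem:dmin_union} to showing $\pi(D(f)\cap U)$ has empty interior, and rule that out by combining a definable section (Lemma \ref{lem:definable_selection}) made continuous by Lemma \ref{lem:cont_dmin} with the local chart inverse from $U$. The only cosmetic difference is the direction of the final contradiction — you show $f\circ\sigma$ is discontinuous on an open set, while the paper shows $f$ is continuous at points of $D(f)\cap U$ — but these are the same argument.
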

\begin{proof}
	Set $(d,\pi,r)=\myedim_n(X)$.
	Let $U$ be a definable open subset of $X$ satisfying the conditions in Proposition \ref{prop:definable_mfd}.
	Set $Z=D(f) \cap U$.
	We want to show that $\pi(Z)$ has an empty interior.
	Assume for contradiction that $\pi(Z)$  has a nonempty interior.
	We can find a definable map $g:\pi(Z) \to Z$ such that $\pi \circ g$ is the identity map on $\pi(Z)$ by Lemma \ref{lem:definable_selection}.
	We can take a nonempty open box $V$ contained in $\pi(Z)$ so that the restriction of $g$ to $V$ is continuous by Lemma \ref{lem:cont_dmin}.
	Take a point $x_0 \in g(V)$ and an open box $B$ containing the point $x_0$ so that the map $\pi$ restricted to $B \cap X$ is a definable homeomorphism onto $\pi(B)$.
	Since $g$ is continuous on $V$, we may assume that $g(V) \subseteq B$ by shrinking $V$ if necessary.
	Set $B'=B \cap \pi^{-1}(V)$.
	We have $X \cap B'=g(V)=Z \cap B'$ because $g(V) \subseteq Z$.
	The restriction $\pi'$ of $\pi$ to $X \cap B'$ is a definable homeomorphism onto $V$ and the restriction $h$ of $g$ to $V$ is its inverse.  
	Consider the composition $f \circ h$.
	We may assume that $f \circ h$ is continuous by Lemma \ref{lem:cont_dmin} by shrinking $V$ if necessary.
	The map $f$ restricted to $X \cap B'=Z \cap B'$ coincides with the composition $f \circ h \circ \pi'$, and it is continuous.
	This contradicts to the definition of $Z$.
	We have demonstrated that $Z=D(f) \cap U$ has an empty interior.
	
	By the definition of extended rank, we have $\myedim_n(D(f) \cap U)<(d,\pi,r)=\myedim_n(X)$.
	We have $\myedim_nD(f) \leq \myedim_n((D(f) \cap U) \cup (X \setminus U))<\myedim_n(X)$ by Lemma \ref{lem:dmin_union}.
\end{proof}

\section{Generalization of definable Michael's selection theorem given by Thamrongthanyalak}\label{sec:michael}

The target of this section is to prove a definable Micheal's selection theorem, which is proved in \cite[Theorem B]{T} when the universe is the set of reals, also holds for any d-minimal expansion of an ordered field.
The author already proved definable Michael's selection theorem in a context different from this case \cite[Theorem 4.10]{FKK} inspired by Thamrongthanyalak's proof.
We employ the same strategy as \cite{FKK}, but we use extended rank instead of dimension.

We first recall basic definitions.

\begin{definition}
	For sets $X$ and $Y$, we denote a map $T$ from $X$ to the power set of $Y$ by $T: X \rightrightarrows Y$ and call it a \textit{set-valued map}.
	When $X$ and $Y$ are topological spaces, a \textit{continuous selection} of a set-valued map $T:X \rightrightarrows Y$ is a continuous function $f:X \rightarrow Y$ such that $f(x) \in T(x)$ for all $x \in X$.  
	
	Consider an expansion $\mathcal F=(F,<,\ldots)$ of a dense linear order without endpoints.
	Let $E$ be a definable subset of $F^n$.
	We consider a \textit{definable set-valued map} $T:E \rightrightarrows F^m$; that is, the \textit{graph} $\Gamma(T):=\bigcup_{x \in E}\{x\} \times T(x) \subseteq E \times F^m$ is definable.
	We define a \textit{definable continuous selection} of $T$, similarly.
	
	A set valued map $T:E \rightrightarrows F^m$ is \textit{lower semi-continuous} if, for any $x_0 \in E$, $y_0 \in T(x_0)$ and a neighborhood $V$ of $y_0$, there exists a neighborhood $U$ of $x_0$ such that $T(x) \cap V \neq \emptyset$ for all $x \in U$.
	A lower semi-continuous set-valued map $T:E \rightrightarrows F^m$ is \textit{continuous} if its graph $\Gamma(T)$ is closed in $E \times F^m$.
\end{definition}

We prepare several definitions and lemmas for proving Theorem \ref{thm:michael}.
\begin{definition}
	Consider a definably complete expansion of an ordered field whose universe is $F$.
	The notation $|\!| x|\!|$ denotes the Euclidean norm in $F^m$.
	Let $E$ be a subset of $F^n$ and $T:E \rightrightarrows F^m$ be a set-valued map.
	Let $f:E \rightarrow F^m$ be a map.
	The notation $T-f:E \rightrightarrows F^m$ denotes the set-valued map given by $(T-f)(x)=\{y-f(x)\;|\; y \in T(x)\}$.
	
	We define the \textit{least norm selection of $T$} when $T(x)$ are closed and convex for all $x \in E$.
	The unique point $y \in T(x)$ whose Euclidean norm $|\!| y|\!|$ is the smallest in $T(x)$ is denoted by $\mylns_T(x)$.
	The existence and uniqueness easily follow from the assumption that $T(x)$ are closed and convex.
	It induces a map $\mylns_T:E \rightarrow F^m$.
\end{definition}

The following lemma is proven straightforwardly.
\begin{lemma}\label{lem:michael_b}
	Let $T: E \rightrightarrows F^m$ be a lower semi-continuous set-valued map and $f:E \rightarrow F^m$ be a continuous map.
	Then, the set-valued map $T-f$ is also lower semi-continuous.
	Furthermore, $T-f$ is continuous when $T$ is continuous.
\end{lemma}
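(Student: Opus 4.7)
The proof splits along the two clauses. For lower semi-continuity of $T-f$, fix $x_0 \in E$, $z_0 \in (T-f)(x_0)$, and an open neighborhood $V$ of $z_0$. Set $y_0 := z_0 + f(x_0) \in T(x_0)$ and choose $\varepsilon > 0$ with $\{z \in F^m : |\!|z-z_0|\!| < \varepsilon\} \subseteq V$. Continuity of $f$ at $x_0$ furnishes a neighborhood $U_1$ of $x_0$ with $|\!|f(x)-f(x_0)|\!| < \varepsilon/2$ for $x \in U_1$. Applying lower semi-continuity of $T$ to the open neighborhood $W := \{y \in F^m : |\!|y-y_0|\!| < \varepsilon/2\}$ of $y_0$ yields a neighborhood $U_2$ of $x_0$ on which $T(x) \cap W \neq \emptyset$. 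For any $x \in U_1 \cap U_2$, pick $y \in T(x) \cap W$ and set $z := y - f(x) \in (T-f)(x)$; the triangle inequality gives $|\!|z-z_0|\!| \leq |\!|y-y_0|\!| + |\!|f(x_0)-f(x)|\!| < \varepsilon$, so $z \in V$, which establishes the first clause.

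For the furthermore, recall that by the definition given just before the lemma, a set-valued map is continuous precisely when it is lower semi-continuous \emph{and} its graph is closed. The previous paragraph already supplies lower semi-continuity of $T-f$ from that of $T$, so it only remains to prove that $\Gamma(T-f)$ is closed in $E \times F^m$. The clean way to do this is to realize $\Gamma(T-f)$ as the preimage of $\Gamma(T)$ under an explicit homeomorphism. Define $\Phi : E \times F^m \to E \times F^m$ by $\Phi(x,z) := (x, z + f(x))$. Continuity of $f$ makes $\Phi$ continuous, and the map $(x,y) \mapsto (x, y - f(x))$ is a continuous inverse, so $\Phi$ is a homeomorphism. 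The equivalence $z \in (T-f)(x) \Longleftrightarrow z + f(x) \in T(x) \Longleftrightarrow (x, z+f(x)) \in \Gamma(T)$ shows $\Gamma(T-f) = \Phi^{-1}(\Gamma(T))$, which is closed since $\Gamma(T)$ is closed and $\Phi$ is continuous.

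No serious obstacle is anticipated. The only point of mild care is the $\varepsilon/2$ bookkeeping in the first paragraph, which is needed so that the chosen witness $y \in T(x)$ can simultaneously be close to $y_0$ and, after translation by $f(x)$ (rather than by $f(x_0)$), remain inside $V$. Definability of $T-f$ is automatic from its defining formula, and no d-minimality machinery is required for either clause.
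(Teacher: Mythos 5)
Your proof is correct; the paper gives no argument at all for this lemma, stating only that it ``is proven straightforwardly,'' and your verification --- the $\varepsilon/2$ triangle-inequality bookkeeping for lower semi-continuity together with the shear homeomorphism $(x,z)\mapsto(x,z+f(x))$ pulling $\Gamma(T)$ back to $\Gamma(T-f)$ for the closed-graph clause --- is exactly the routine check the author intended to omit. No gaps.
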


We also need the following lemma:
The proof given here is almost the same as that of \cite[Lemma 4.13]{FKK}, but the author gives a complete proof here for readers' convenience.
\begin{lemma}\label{lem:michael_lns}
	Let $T: E \rightrightarrows F^m$ be a definable continuous set-valued map such that $T(x)$ are closed and convex for all $x \in E$.
	Then, the least norm selection $\mylns_T:E \rightarrow F^m$ is a definable continuous selection of $T$.
\end{lemma}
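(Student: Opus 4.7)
The definability of $\mylns_T$ and the selection property are immediate: $y = \mylns_T(x)$ is defined by the first-order formula $y \in T(x) \wedge \forall z\,(z \in T(x) \to |\!|y|\!|^2 \leq |\!|z|\!|^2)$, and existence and uniqueness of the minimum-norm point in the closed convex set $T(x) \subseteq F^m$ follow from definable completeness together with strict convexity of the squared norm. The substantive task is continuity at an arbitrary fixed $x_0 \in E$; set $y_0 := \mylns_T(x_0)$ and $r_0 := |\!|y_0|\!|$.

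The plan is to first establish continuity of the scalar function $n(x) := |\!|\mylns_T(x)|\!|$ and then deduce continuity of $\mylns_T$ itself from a convex-optimality inequality. For upper semi-continuity of $n$ at $x_0$, I would use lower semi-continuity of $T$: given $\mu > 0$, there is a neighborhood $U_1$ of $x_0$ such that $T(x) \cap B(y_0, \mu)$ is nonempty for $x \in U_1$, and any witness $z$ in the intersection gives $n(x) \leq |\!|z|\!| \leq r_0 + \mu$. For lower semi-continuity of $n$, I would use the continuity of $T$ (closedness of $\Gamma(T)$): for each $c \in F$,
\[ \{x \in E : n(x) \leq c\} = p\bigl(\Gamma(T) \cap (E \times B_c)\bigr), \]
where $p : E \times F^m \to E$ is the projection and $B_c := \{y \in F^m : |\!|y|\!| \leq c\}$ is bounded and closed. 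Definable compactness of bounded closed subsets of $F^m$ in d-minimal expansions of ordered fields makes the image of $p$ closed, so $n$ is lower semi-continuous.

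With $n$ continuous at $x_0$, I would then deploy the first-order convex optimality condition. For $y := \mylns_T(x)$ and any $z \in T(x)$, the quadratic $t \mapsto |\!|y + t(z - y)|\!|^2$ is minimized over $[0, 1]$ at $t = 0$, yielding $\langle y, z - y \rangle \geq 0$, that is $\langle y, z \rangle \geq |\!|y|\!|^2$. Taking $z \in T(x) \cap B(y_0, \mu)$ from the lower-semi-continuity step for $T$ and applying the Cauchy--Schwarz inequality gives
\[ |\!|\mylns_T(x) - y_0|\!|^2 \leq \bigl(r_0^2 - |\!|\mylns_T(x)|\!|^2\bigr) + 2|\!|\mylns_T(x)|\!|\cdot \mu. \]
Continuity of $n$ makes $r_0^2 - |\!|\mylns_T(x)|\!|^2$ tend to $0$ as $x \to x_0$, and with $|\!|\mylns_T(x)|\!|$ uniformly bounded near $x_0$, choosing $\mu$ small makes the right-hand side arbitrarily small, proving continuity of $\mylns_T$ at $x_0$.

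The main obstacle will be the lower semi-continuity of $n$: classically this is handled by sequential compactness, which is unavailable in arbitrary $F$. Its role is played by the definable compactness of bounded closed subsets of $F^m$ in d-minimal expansions of ordered fields, which yields the needed closedness of the projection. Once that is in hand, the remainder is a direct adaptation of the classical convex-analysis proof, mirroring the strategy of \cite[Lemma 4.13]{FKK}.
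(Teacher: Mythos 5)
Your proposal is correct, but it takes a genuinely different route from the paper. The paper argues by contradiction: assuming $\mylns_T$ is discontinuous at $a$, it picks a point $(a,b)$ in $\mycl(\Gamma(\mylns_T)) \setminus \Gamma(\mylns_T)$, notes that $b \in T(a)$ (closedness of $\Gamma(T)$) forces $\|b\| > \|\mylns_T(a)\|$ by uniqueness of the least norm point, and then contradicts this with the bound $\|\mylns_T(x)\| < \|\mylns_T(a)\| + \varepsilon$ near $a$ coming from lower semi-continuity. You instead give a direct, quantitative argument: continuity of $n(x)=\|\mylns_T(x)\|$ (upper semi-continuity from lower semi-continuity of $T$, lower semi-continuity from closedness of $\Gamma(T)$ plus closedness of projections along definably compact fibers), followed by the variational inequality $\langle y, z-y\rangle \geq 0$ and Cauchy--Schwarz. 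Both proofs use exactly the same two hypotheses on $T$. Your version has the advantage of making explicit a point the paper glosses over: the existence of the limit point $(a,b)$ over $a$ in the paper's proof tacitly requires local boundedness of $\mylns_T$ near $a$ together with a definable-compactness argument, which is essentially the same projection fact you invoke. The trade-off is that your route leans on that projection lemma (projections of closed sets along closed bounded fibers are closed), which holds in any definably complete expansion of an ordered field and is in the spirit of the cited \cite[Proposition 1.10]{M}, but is not isolated as a lemma in the paper; you should either cite it precisely or include the short finite-intersection-property argument. The paper's contradiction argument is shorter; yours is more self-contained on the delicate point and yields an explicit modulus of continuity.
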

\begin{proof}
	Definability is obvious.
	We have $\mylns_T(x) \in T(x)$ because $T(x)$ is closed and convex.
	The remaining task is to demonstrate that $\mylns_T$ is continuous. 
	
	Assume for contradiction that $\mylns_T$ is discontinuous at $a \in E$.
	Let $G$ be the graph of $\mylns_T$ and $G'$ be its closure in $E \times F^m$.
	Since $G \subseteq \Gamma(T)$ and $\Gamma(T)$ is closed in $E \times F^m$, the set $G'$ is also contained in the graph $\Gamma(T)$.
	We can take $b \in M^n$ such that $(a,b) \in G' \setminus G$ because $\mylns_T$ is discontinuous at the point $a \in E$.
	The point $b$ is contained in $T(a)$.
	
	Set $\varepsilon =(|\!| b |\!|-|\!| \mylns_T(a) |\!|)/2$.
	The definition of $\mylns_T$ yields that $\varepsilon >0$.
	Due to the lower semi-continuity of $T$, there exists a definable open neighborhood $U$ of $a$ such that, for any $x \in U$, we have $|\!| y_x - \mylns_T(a) |\!|<\varepsilon$ for some $y_x \in T(x)$.
	In particular, we get $|\!|\mylns_T(x)|\!| \leq |\!|y_x |\!| < |\!|\mylns_T(a)|\!|+\varepsilon = (|\!| b |\!|+|\!| \mylns_T(a) |\!|)/2$ for all $x \in U$.
	On the other hand, the point $(a,b)$ is in the closure of the graph $G$ of $\mylns_T$.
	We can take $x_0 \in U$ so that $ |\!| \mylns_T(x_0) -b  |\!| < \varepsilon$.
	It implies that $ |\!| \mylns_T(x_0) |\!| \geq  |\!| b  |\!| -  |\!| \mylns_T(x_0) -b  |\!| >  |\!| b  |\!| -\varepsilon = (|\!| b |\!|+|\!| \mylns_T(a) |\!|)/2$.
	We have obtained a contradiction.
\end{proof}

We are now ready to prove a definable Michael's selection theorem.
\begin{theorem}[Definable Michael's selection theorem]\label{thm:michael}
	Consider a d-minimal expansion $\mathcal F=(F,<,+,\cdot, 0,1,\ldots)$ of an ordered field.
	Let $E$ be a definable closed subset of $F^n$ and $T:E \rightrightarrows F^m$ be a definable lower semi-continuous set-valued map such that $T(x)$ are closed and convex for all $x \in E$.
	The set-valued map $T$ has a definable continuous selection.
\end{theorem}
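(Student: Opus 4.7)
I would prove the theorem by induction on the extended rank $\myedim_n(E)$, which is a well-founded order by Remark \ref{rem:induction}. The base case $\myedim_n(E)=-\infty$ (that is, $E=\emptyset$) is vacuous.

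For the inductive step, I would first apply Lemma \ref{lem:definable_selection} to the definable graph $\Gamma(T)\subseteq E\times F^m$ to obtain a definable (a priori discontinuous) selection $f_0:E\to F^m$ with $f_0(x)\in T(x)$ for each $x\in E$. Let $D\subseteq E$ denote the set of discontinuity points of $f_0$. By Corollary \ref{cor:dmin_cont2} we have $\myedim_n(D)<\myedim_n(E)$, and then Lemma \ref{lem:frontier_extended} together with Lemma \ref{lem:dmin_union} yield $\myedim_n(A)<\myedim_n(E)$ for the closed set $A:=\mycl(D)$. Since $E$ is closed in $F^n$, so is $A$. Applying the induction hypothesis to $A$ and the restriction $T|_A$ (which remains lower semi-continuous with closed convex values), I obtain a definable continuous selection $g:A\to F^m$.

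The remaining task is to combine $g$ on $A$ with $f_0|_{E\setminus A}$ into a single definable continuous selection on $E$. To this end I would extend $g$ to a definable continuous map $\tilde g:E\to F^m$ by Theorem \ref{thm:bounded_tietze}, after first composing with a bounded definable homeomorphism of $F^m$ (for instance the coordinatewise map $t\mapsto t/\sqrt{1+t^2}$) to handle possible unboundedness. Then the translated set-valued map $T-\tilde g:E\rightrightarrows F^m$ is lower semi-continuous with closed convex values by Lemma \ref{lem:michael_b}, and its fiber over every $x\in A$ contains $0$. If I can construct a definable continuous selection $h:E\to F^m$ of $T-\tilde g$ with $h|_A\equiv 0$, then $f:=\tilde g+h$ is the desired definable continuous selection of $T$, matching $g$ on $A$.

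The main obstacle will be producing this corrective map $h$. Because $T-\tilde g$ is merely lower semi-continuous, not ``continuous'' in the paper's closed-graph sense, Lemma \ref{lem:michael_lns} does not directly yield continuity of the least-norm selection $\mylns_{T-\tilde g}$. I expect to circumvent this by exploiting the facts that $\mylns_{T-\tilde g}$ vanishes identically on $A$ and that the continuous selection $f_0-\tilde g$ of $T-\tilde g$ exists on $E\setminus A$: the lsc condition, combined with $\tilde g|_A=g\in T|_A$, should force $\mylns_{T-\tilde g}$ to tend continuously to $0$ along $A$, while continuity on $E\setminus A$ is handled either directly or via a secondary induction (on the extended rank of the discontinuity set of $\mylns_{T-\tilde g}$, which lies in $A$ and hence has rank strictly less than $\myedim_n(E)$). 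This mirrors the strategy of \cite[Theorem 4.10]{FKK} with $\myedim$ in place of ordinary dimension, as the paper explicitly indicates.
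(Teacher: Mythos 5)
Your gluing skeleton (induct on extended rank, get a continuous selection on a small closed piece, extend by Tietze, translate so that it becomes $0$, take the least-norm selection on the complementary piece, and use lower semi-continuity to get continuity at the frontier) is exactly the paper's, but your decomposition of $E$ is based on the wrong set, and the step you flag as ``the main obstacle'' is precisely where the argument breaks. The discontinuity set of $\mylns_{T-\tilde g}$ does \emph{not} lie in $A=\mycl(D)$: the continuity of one particular definable selection $f_0$ on $E\setminus A$ says nothing about whether the graph of $T$ over $E\setminus A$ is closed, and Lemma \ref{lem:michael_lns} needs the closed-graph hypothesis, not mere lower semi-continuity. Concretely, take $E=F$, $T(x)=[1,2]$ for $x\leq 0$ and $T(x)=[0,2]$ for $x>0$. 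This $T$ is lower semi-continuous with closed convex values, and the constant map $2$ is a definable continuous selection, so definable choice may well return a continuous $f_0$, giving $D=A=\emptyset$; your scheme then reduces to taking $\mylns_T$ on all of $E$, which equals $1$ for $x\leq 0$ and $0$ for $x>0$ and is discontinuous at $0\notin A$. No secondary induction on the discontinuity set of $\mylns_{T-\tilde g}$ can be run inside $A$, because that set escapes $A$.

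The missing idea is the paper's Claim~1: one must locate, with control on extended rank, the set over which the graph of $T$ fails to be closed. The paper first passes to a piece $C$ of $E$ on which $\pi$ is a local homeomorphism (Proposition \ref{prop:definable_mfd}), and then proves that $S=\Pi(\partial\Gamma(T|_C))$ satisfies $\myedim_n S<\myedim_n C$; this is the genuinely hard step (it uses definable choice on $\partial\Gamma(T|_C)$, generic continuity, and a compactness-style argument via \cite[Corollary]{M} to derive a contradiction with lower semi-continuity). Removing $\mycl_E(S)\cup(E\setminus C)$ rather than $\mycl(D)$ yields an open piece $C_1$ on which $T$ is continuous in the closed-graph sense, so that Lemma \ref{lem:michael_lns} applies and the least-norm selection of the translated map is continuous there. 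Without an analogue of Claim~1 your induction has no way to terminate, so as written the proposal has a genuine gap.
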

\begin{proof}
	Set $(d,\pi,r)=\myedim_n(E)$.
	Let $C$ be a definable open subset of $E$ satisfying the following conditions:
	\begin{enumerate}
		\item[(1)]  The inequality $\myedim_n(E \setminus C)<\myedim_n(E)$ holds;
		\item[(2)] For any $x \in C$, there exists an open box $B$ containing the point $x$ such that the restriction of $\pi$ to $B \cap E$ is a definable homeomorphism onto $\pi(B)$.
	\end{enumerate}
	It exists by Proposition \ref{prop:definable_mfd}.
	Note that $\myedim_n(C)=(d,\pi,r)$ by condition (1) and Lemma \ref{lem:dmin_union}.
	The notation $\Gamma(T|_C)$ denotes the graph of the restriction of $T$ to $C$.
	The notation $\partial \Gamma(T|_C)$ denotes its frontier.
	Let $\Pi:F^{n+m} \rightarrow F^n$ be the projection onto the first $n$ coordinates.
	Set $S=\Pi(\partial \Gamma(T|_C))$.
	We put $d_{T(x)}(y)=\inf \{|y-y'|\;|\; y'\in T(x)\}$ for each $y \in F^m$.
	We first demonstrate the following claim:
	\medskip
	
	\textbf{Claim 1.} $\myedim_n S < \myedim_n C$.
	
	\begin{proof}[Proof of Claim 1]
	Assume for contradiction that $\myedim_n S \geq \myedim_n C$.
	Since $S$ is contained in the closure of $C$, we have $\myedim_n S \leq \myedim_n C$ by Lemma \ref{lem:frontier_extended} and Lemma \ref{lem:dmin_union}.
	We have $\myedim_n S = \myedim_n C=(d,\pi,r)$.
	We also have $\myedim_n (S \cap C)=(d,\pi,r)$ because $\myedim_n \partial C < \myedim_n C$ by Lemma \ref{lem:frontier_extended} and $S$ is contained in the closure of $C$.
	Take a definable map $f:S \rightarrow F^{m}$ such that $(x,f(x)) \in \partial \Gamma(T|_C)$ for all $x \in S$ by Lemma \ref{lem:definable_selection}.
	Let $g:S \to \partial \Gamma(T|_C)$ be the definable map given by $g(x)=(x,f(x))$.
	We have $d_{T(x)}(f(x))>0$ because $T(x)$ is closed.
	The definable subset $D$ of $S \cap C$ of points at which at least one of $f$ and the map given by $x \mapsto d_{T(x)}(f(x))$ is not continuous is of extended rank smaller than $(d,\pi,r)$ by Corollary \ref{cor:dmin_cont2} and Lemma \ref{lem:dmin_union}.
	We have $\myedim_n(\mycl_X(D)) < (d,\pi,r)$ by Lemma \ref{lem:frontier_extended} and Lemma \ref{lem:dmin_union}.
	We have $\myedim_n ((S \cap C) \setminus \mycl_X(D)) = (d,\pi,r)$ by Lemma \ref{lem:dmin_union}.
	By Proposition \ref{prop:definable_mfd}, we can choose a point $z \in (S \cap C) \setminus \mycl_X(D)$ and an open box $B_1$ containing the point $z$ so that the restriction of $\pi$ to $B_1 \cap (S \cap C \setminus \mycl_X(D))$ is a definable homeomorphism onto $\pi(B_1)$.
	We may assume that $B_1 \cap \mycl_X(D)=\emptyset$ by shrinking $B_1$ if necessary.
	We can take an open box $B_2$ containing the point $z$ so that the restriction of $\pi$ to $B_2 \cap C$ is a definable homeomorphism onto $\pi(B_2)$ by condition (2).
	The image $\pi(B_2 \cap (B_1 \cap S \cap C))$ is open and contains the point $\pi(z)$ because the restriction of $\pi$ to $B_1 \cap S \cap C$ is a definable homeomorphism onto $\pi(B_1)$.
	Choose an open box $B_3$ containing $\pi(z)$ and contained in $\pi(B_2 \cap (B_1 \cap S \cap C))$.
	Set $B'=B_1 \cap B_2 \cap \pi^{-1}(B_3)$.
	We have $z \in B'$ and $B' \cap C=B'  \cap S \cap C$.
	Furthermore, the restriction of $\pi$ to $B' \cap C$ is a definable homeomorphism onto $\pi(B')$.
	
	Choose a nonempty bounded closed box $Z$ which is a neighborhood of the point $z$ and contained in $B'$.
	Set $G=g(Z)$.
	Since $g$ is continuous, the definable set $G$ is closed and bounded in $F^{n+m}$ by \cite[Proposition 1.10]{M}.
	We finally constructed the definable subset $G$ of $S \cap C$ such that
	\begin{enumerate}
		\item[(a)] $G$ is closed and bounded in $F^{n+m}$,
		\item[(b)] the interior of $G$ in $C$ is not empty, and 
		\item[(c)] the restrictions of $f$ and the map given by $x \mapsto d_{T(x)}(f(x))$ to $G$ are continuous.
	\end{enumerate}
	We can take a positive $u \in F$ so that $d_{T(x)}(f(x))>u$ for all $x \in G$ by \cite[Corollary]{M}.
	Take a point $x_0$ in the interior of $G$ in $C$ and set $\varepsilon=u/3$.
	There exists an open neighborhood $U'$ of $x_0$ in $S$ contained in $G$ such that $|f(x)-f(x_0)|<\varepsilon$ for all $x \in U'$ because the restriction of $f$ to $G$ is continuous.
	By condition (b), the set $U'$ is also an open neighborhood of $x_0$ in $C$, shrinking $U'$ if necessary.
	We can take $(x,y) \in \Gamma(T|_C)$ such that $x \in U'$ and $|y-f(x_0)|<\varepsilon$ because $(x_0,f(x_0))$ is in the closure of $\Gamma(T|_C)$.
	We get $d_{T(x)}(f(x)) \leq |y-f(x)| \leq |y-f(x_0)|+|f(x)-f(x_0)|< 2\varepsilon = 2u/3$.
	We have obtained a desired contradiction.
	\end{proof}
	
	\textbf{Claim 2.} There exists a partition $E=C_1 \cup \ldots \cup C_k$ of $E$ into definable sets such that $C_j$ is open in $\bigcup_{i=j}^kC_i$ and the restriction $T|_{C_j}:C_j \rightrightarrows F^m$ to $C_j$ is continuous for any $1 \leq j \leq k$.
	
	\begin{proof}[Proof of Claim 2]
		We prove it by induction on $\myedim_n E$.
		It is possible thanks to Remark \ref{rem:induction}.
		When $E$ is of extend rank $(0,\pi_0,1)$, $E$ is discrete and closed by the assumption.
		The set-valued map $T$ is continuous in this case.
		We consider the case in which $\myedim_n E>(0,\pi_0,1)$.
		Let $C$ be a definable open subset of $E$ satisfying conditions (1) and (2).
		Set $S=\Pi(\partial \Gamma(T|_C))$.
		Set $C_1=C \setminus \mycl_E(S)$.
		It is obvious that the restriction of $T$ to $C_1$ is continuous.
		By Claim 1, condition (1), Lemma \ref{lem:frontier_extended} and Lemma \ref{lem:dmin_union}, the definable set $E'= (E \setminus C) \cup \mycl_E(S)$ is of extend rank smaller than $\myedim_n E$.
		Apply the induction hypothesis to $E'$.
		We get the desired partition into definable sets.
	\end{proof}
	
	We finally prove the theorem by induction on the number of definable sets $k$ given in Claim 2.
	The theorem immediately follows from Lemma \ref{lem:michael_lns} when $k=1$.
	We consider the case in which $k>1$.
	Set $D=\bigcup_{i=2}^{k}C_i$, which is closed in $E$.
	The set $D$ is also closed in $F^n$ because $E$ is closed in $F^n$.
	We can take a definable continuous selection $f_1:D \rightarrow F^m$ of the restriction $T|_D$ of $T$ to $D$ by the induction hypothesis.
	Let $f_2:F^n \rightarrow F^m$ be a definable continuous extension of $f_1$ given by Theorem \ref{thm:bounded_tietze}.
	The map $f_3$ is its restriction to $E$.
	We may assume that $f_3$ is constantly zero considering $T-f_3$ in place of $T$ by Lemma \ref{lem:michael_b}.
	
	The least norm selection $\mylns_T$ is definable by Lemma \ref{lem:michael_lns}.
	The least norm selection $\mylns_{T|_{C_1}}$ for the restriction $T|_{C_1}$ coincides with the restriction of $\mylns_T$ to $C_1$ by the definition of least norm selections.
	It is continuous by Claim 2 and Lemma \ref{lem:michael_lns}.
	Consider the definable map $f:E \rightarrow F^m$ given by
	$$
	f(x)=\left\{
	\begin{array}{ll} \mylns_T(x) & \text{ when }x \in C_1,\\ 0 & \text{ elsewhere.}\end{array}
	\right.
	$$
	We have only to show that $f$ is continuous.
	The map $f$ is obviously continuous off the frontier $\partial C_1 \cap E$ of $C_1$ in $E$.
	Fix an arbitrary point $x_0 \in \partial C_1 \cap E$.
	We have $f(x_0)=0$.
	Since $T$ is lower semi-continuous, for any $\varepsilon>0$, there exists $\delta>0$ such that we can take $y_x \in T(x)$ with $|\!|y_x|\!|<\varepsilon$ for all $x \in E$ with $|\!|x-x_0|\!|<\delta$.
	We have $|\!| \mylns_T(x) |\!| \leq |\!|y_x|\!|<\varepsilon$ by the definition of least norm selections.
	It implies that $f$ is continuous.
\end{proof}

\section{Improvement of definable Michael's selection theorem -- partial result}\label{sec:improvement}

We next consider the problem whether we can take a definable continuous selection $f$ so that $f(x) \in \myint(T(x))$ whenever $T(x)$ has a nonempty interior.

Recall that a function $f:X \to F \cup \{\pm \infty\}$ is \textit{lower (resp. upper) semi-continuous} if the set $\{x \in X\;|\; f(x)>a\}$ (resp. $\{x \in X\;|\; f(x)<a\}$) is open for each $a \in F \cup  \{\pm \infty\}$.

We first recall trivial facts.
\begin{lemma}\label{lem:well_known_semicont}
Consider an expansion of dense linear order without endpoints.
The sum of two lower (resp. upper) semi-continuous functions is again lower (resp. upper) semi-continuous.
\end{lemma}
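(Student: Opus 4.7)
The plan is to prove only the lower semi-continuous version; the upper semi-continuous case follows by applying it to $-f$ and $-g$, since multiplication by $-1$ interchanges the two conditions. So suppose $f, g : X \to F \cup \{\pm\infty\}$ are lower semi-continuous and fix $a \in F \cup \{\pm\infty\}$. I want to verify that the set $U_a := \{x \in X : (f+g)(x) > a\}$ is open, and it suffices to show that every point $x_0 \in U_a$ has an open neighborhood contained in $U_a$.

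The key observation is that, given $f(x_0) + g(x_0) > a$, one can split the strict inequality: find $b, c \in F$ such that $b < f(x_0)$, $c < g(x_0)$, and $b + c > a$. In the generic case $f(x_0), g(x_0) \in F$ and $a \in F$, choose $\varepsilon = (f(x_0) + g(x_0) - a)/3 > 0$ and set $b = f(x_0) - \varepsilon$, $c = g(x_0) - \varepsilon$; then $b + c = f(x_0) + g(x_0) - 2\varepsilon > a + \varepsilon > a$. The boundary cases where some value is $\pm\infty$, or where $a = -\infty$, are handled by picking $b$ or $c$ arbitrarily large in $F$ (the sum in the statement is assumed to be well-defined, so we need not worry about $+\infty$ added to $-\infty$).

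Once such $b$ and $c$ are selected, lower semi-continuity of $f$ and $g$ makes $\{x \in X : f(x) > b\}$ and $\{x \in X : g(x) > c\}$ open neighborhoods of $x_0$; their intersection is an open neighborhood $V$ of $x_0$, and on $V$ one has $(f+g)(x) > b + c > a$, so $V \subseteq U_a$.

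There is no real obstacle here: the argument is entirely topological and does not appeal to d-minimality or any other structural property of $\mathcal F$ beyond the order topology on $F$. The only mildly delicate point is the bookkeeping for the $\pm\infty$ cases, which is routine.
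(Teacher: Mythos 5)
Your argument is correct and is the standard one; the paper itself offers no proof of this lemma, simply declaring it well known, so there is nothing to diverge from. The only caveat is that your choice $\varepsilon=(f(x_0)+g(x_0)-a)/3$ uses the field (or at least divisible ordered group) structure, which the stated hypothesis ``expansion of a dense linear order'' does not literally supply --- but since forming $f+g$ already presupposes an addition compatible with the order, and every application in the paper is to an ordered field, this is harmless; density of the order would in any case let you split the inequality $f(x_0)+g(x_0)>a$ into $b<f(x_0)$, $c<g(x_0)$, $b+c>a$ without dividing.
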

\begin{proof}
	It is a well-known fact.
\end{proof}

\begin{lemma}\label{lem:zeroset}
	Consider a definably complete expansion of an ordered group whose universe is $F$.
	Let $X$ be a definable set and $A$ be a definable closed subset of $X$.
	There exists a nonnegative definable continuous function $f:X \to F$ whose zero set is $A$.
\end{lemma}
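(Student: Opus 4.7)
The plan is to take $f$ to be the distance function to $A$ measured in the sup norm. Assuming $A$ is nonempty, define
$$
f(x) = \inf\{|x-a| \;|\; a \in A\}
$$
for each $x \in X$. The set $\{|x-a| \;|\; a \in A\}$ is a nonempty definable subset of $F$ bounded below by $0$, so its infimum exists in $F$ by definable completeness; hence $f:X \to F$ is a well-defined nonnegative definable function. If $A = \emptyset$, let $f$ instead be the constant function whose value is any fixed positive element of $F$, which exists because $F$ is densely ordered without endpoints.

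Continuity will follow from the standard Lipschitz estimate. For $x, x' \in X$ and $a \in A$, the triangle inequality for the sup norm, which holds in any ordered abelian group, gives $|x' - a| \leq |x' - x| + |x - a|$; taking the infimum over $a \in A$ yields $f(x') \leq |x' - x| + f(x)$. Swapping the roles of $x$ and $x'$ gives $|f(x) - f(x')| \leq |x - x'|$, so $f$ is Lipschitz and hence continuous.

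It remains to identify the zero set of $f$ with $A$. The containment $A \subseteq f^{-1}(0)$ is immediate from $f(a) \leq |a - a| = 0$. For the reverse containment, fix $x \in X \setminus A$. Because $A$ is closed in $X$, the complement $X \setminus A$ is open in $X$, so there is an open $U \subseteq F^n$ with $x \in U$ and $U \cap X \subseteq X \setminus A$. Choose $r > 0$ small enough that the open box $B = \{y \in F^n \;|\; |y - x| < r\}$ is contained in $U$. Since $A \subseteq X$, we have $B \cap A = B \cap X \cap A \subseteq U \cap X \cap A = \emptyset$, which forces $|x - a| \geq r$ for every $a \in A$, and hence $f(x) \geq r > 0$. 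There is no real obstacle in this proof; the only mild subtlety is that $A$ is assumed closed in $X$ rather than in $F^n$, and this is precisely what supplies, at each $x \in X \setminus A$, an open box in $F^n$ about $x$ disjoint from $A$.
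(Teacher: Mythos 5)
Your proof is correct and is exactly the paper's construction: the paper also takes $f(x)=\inf\{|y-x| \mid y\in A\}$ and leaves the verification as routine, which you have carried out in full (including the harmless $A=\emptyset$ case and the correct use of closedness of $A$ in $X$).
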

\begin{proof}
	Define $f:X \to F$ by $f(x)=\inf\{{|y-x|\;|\; y \in A}\}$.
	It is a routine to check that $f^{-1}(0)=A$ and $f$ is continuous. 
	We omit the details.
\end{proof}

\begin{lemma}\label{lem:zeroset2}
	Consider a definably complete expansion of an ordered field whose universe is $F$.
	Let $X$ be a definable set and $A_1$ and $A_2$ be definable closed subsets of $X$ with $A_1 \cap A_2 = \emptyset$.
	There exists a definable continuous function $f:X \to [0,1]$ such that $f^{-1}(0)=A_1$ and $f^{-1}(1)=A_2$.
\end{lemma}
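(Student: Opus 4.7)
The plan is to reduce this to the previous lemma by the classical Urysohn-style quotient trick, taking advantage of having an ordered field rather than just an ordered group so that division is available.

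First, I would apply Lemma \ref{lem:zeroset} twice to obtain definable continuous functions $g_1,g_2:X\to F$ with $g_1,g_2\ge 0$, $g_1^{-1}(0)=A_1$, and $g_2^{-1}(0)=A_2$. Since $A_1\cap A_2=\emptyset$, the sum $g_1(x)+g_2(x)$ is strictly positive at every point of $X$: if both vanished at $x$, then $x$ would lie in $A_1\cap A_2$.

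Then I would define
\[
f(x)=\frac{g_1(x)}{g_1(x)+g_2(x)}.
\]
This is definable, and continuous on $X$ since the denominator is a continuous nowhere-vanishing function (here we use that $\mathcal F$ expands a field, so the reciprocal is continuous). Because $0\le g_1\le g_1+g_2$, the values lie in $[0,1]$. For the zero sets: $f(x)=0$ iff $g_1(x)=0$ iff $x\in A_1$; and $f(x)=1$ iff $g_2(x)=0$ iff $x\in A_2$, using in the latter case that when $g_2(x)=0$ one has $g_1(x)>0$ so the fraction is well-defined and equals $1$.

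There is essentially no obstacle: the only point that could in principle fail in a general ordered field is continuity of division, but this is a routine consequence of $g_1+g_2$ being bounded away from zero on any point's neighborhood (which follows from continuity and positivity) together with the standard $\varepsilon$–$\delta$ argument that goes through verbatim in any definably complete expansion of an ordered field. No further ingredients from d-minimality are required.
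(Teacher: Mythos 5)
Your proposal is correct and is exactly the argument the paper gives: apply Lemma \ref{lem:zeroset} to each $A_i$ and form the quotient $f_1/(f_1+f_2)$, which is well-defined since $A_1\cap A_2=\emptyset$ forces the denominator to be everywhere positive. Your additional remarks on continuity of division are fine but routine, as the paper implicitly assumes.
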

\begin{proof}
	Let $f_i:X \to F$ be a nonnegative definable continuous function whose zero set is $A_i$ for each $i=1,2$, which exists by Lemma \ref{lem:zeroset}.
	We define $f:X \to [0,1]$ by $f(x)=f_1(x)/(f_1(x)+f_2(x))$.
	This function satisfies the requirement.
\end{proof}

\begin{proposition}\label{prop:obvious}
	Consider a definably complete structure whose universe is $F$.
	Let $T:E \rightrightarrows F$ be a lower semi-continuous set-valued definable map.
	Consider the definable functions  $f,g:E \to F \cup \{\pm \infty\}$ defined by $f(x)=\inf T(x)$ and $g(x)=\sup T(x)$.
	Then, $f$ is upper semi-continuous and $g$ is lower semi-continuous.
\end{proposition}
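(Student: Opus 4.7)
The plan is to unwind the definitions directly. To show $f$ is upper semi-continuous it suffices to verify that the set $\{x \in E\;|\; f(x) < a\}$ is open for every $a \in F \cup \{\pm\infty\}$; the case $a = -\infty$ is trivial since the set is empty, so assume $a > -\infty$. Given $x_0$ with $f(x_0) = \inf T(x_0) < a$, I use the definition of infimum to pick $y_0 \in T(x_0)$ with $y_0 < a$, choose $\varepsilon > 0$ with $y_0 + \varepsilon < a$, and take $V = (y_0-\varepsilon, y_0+\varepsilon)$ as a neighborhood of $y_0$. Lower semi-continuity of $T$ at $x_0$ with respect to this $V$ yields a neighborhood $U$ of $x_0$ such that $T(x) \cap V \neq \emptyset$ for every $x \in U$; for any such $x$ pick $y \in T(x) \cap V$ and conclude $f(x) \le y < y_0 + \varepsilon < a$.

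The argument for $g$ is symmetric: to show $\{x \in E\;|\; g(x) > a\}$ is open with $a < +\infty$, pick $y_0 \in T(x_0)$ with $y_0 > a$, choose $\varepsilon > 0$ with $y_0 - \varepsilon > a$, take $V = (y_0-\varepsilon, y_0+\varepsilon)$, apply lower semi-continuity to obtain $U$, and for $x \in U$ pick $y \in T(x) \cap V$ to get $g(x) \ge y > y_0 - \varepsilon > a$.

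The only mild subtlety is handling the boundary values $a = \pm\infty$, where the set in question is either empty or the entire domain-filtered set; these cases reduce to showing $\{x \in E\;|\; f(x) < +\infty\}$ and $\{x \in E\;|\; g(x) > -\infty\}$ are open, but whenever $T(x_0)$ contains some $y_0 \in F$ the same neighborhood argument produces a neighborhood of $x_0$ on which $T(x)$ meets a bounded interval, so both $f$ and $g$ take finite values there. I do not expect any genuine obstacle: the statement is essentially a restatement of lower semi-continuity of $T$ tested against half-lines, and no results beyond the definitions (definable completeness guarantees $\inf$ and $\sup$ exist in $F \cup \{\pm\infty\}$) are required.
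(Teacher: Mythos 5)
Your proposal is correct and follows essentially the same route as the paper: unwind the definition of lower semi-continuity of $T$ at a witness $y_0\in T(x_0)$ lying on the correct side of $a$, and conclude that $\inf$ (resp.\ $\sup$) of $T(x)$ stays on that side for $x$ in the resulting neighborhood. The only cosmetic differences are that the paper tests against the half-line $\{y>a\}$ rather than a bounded interval and writes out only the case of $g$, leaving $f$ to symmetry.
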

\begin{proof}
	We only prove that $g$ is lower semi-continuous.
	We can prove that $f$ is upper semi-continuous in the same manner.
	We have only to prove that $G_a:=\{x \in E\;|\; g(x)>a\}$ is open for each $a \in F$.
	Fix an arbitrary $a \in F$ such that $G_a$ is not empty.
	Take an arbitrary point $x_0 \in G_a$.
	There exists an open subset $U$ of $x_0$ such that $T(x) \cap \{y \in F\;|\; y>a\}$ is not empty for each $x \in U$.
	We have $g(x)>a$ for each $x \in U$ by the definition of $g$.
	We have demonstrated that $U \subseteq G_a$, and this means that $G_a$ is open. 
\end{proof}

Consider a lower semi-continuous definable set-valued map $T: E \rightrightarrows F$ whose image $T(x)$ is closed and convex for each $x \in E$. 
A definable continuous selection $h:E \to F$ of $T: E \rightrightarrows F$ satisfies the additional condition $h(x) \in \myint T(x)$ whenever $\myint T(x) \neq \emptyset$ if and only if  $f \leq h \leq g$ on $E$, and $f(x)<h(x)<g(x)$ whenever $f(x)<g(x)$, where $f$ and $g$ are definable maps defined in Proposition \ref{prop:obvious}.
The following theorem together with Proposition \ref{prop:obvious} asserts that we can find such a definable continuous selection $h$.
A similar result in semialgebraic context is found in \cite[Lemma 3.1]{CPS}.

\begin{theorem}\label{thm:weak_Michael}
	Consider a d-minimal expansion of an ordered field $\mathcal F=(F,<,+,\cdot,0,1,\ldots)$.
	Let $E$ be a definable set and $f,g:E \to F \cup \{\pm \infty\}$ be definable maps such that the interval $[f(x),g(x)]$ is not empty for each $x \in X$.
	Assume further that $f$ is upper semi-continuous and $g$ is lower semi-continuous.
	Then there exists a definable continuous function $h:E \to F$ such that $f \leq h \leq g$ on $E$.
	Moreover, we can find $h$ so that $f(x)<h(x)<g(x)$ whenever $f(x)<g(x)$.
\end{theorem}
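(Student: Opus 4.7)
The plan is to deduce the theorem from two applications of the Michael selection theorem (Theorem~\ref{thm:michael}). First, I reduce to the case where $f$ and $g$ take values in $[-1,1] \subseteq F$, using the definable order-preserving homeomorphism $\varphi : F \to (-1,1)$ given by $\varphi(x) = x/\sqrt{1+x^2}$, extended by $\varphi(\pm\infty) = \pm 1$. Setting $\tilde f = \varphi \circ f$ and $\tilde g = \varphi \circ g$, upper/lower semi-continuity is preserved and the hypothesis that $[f(x), g(x)] \cap F$ is nonempty becomes $\tilde f(x) < 1$ and $\tilde g(x) > -1$ pointwise. Producing a continuous definable $\tilde h : E \to (-1,1)$ with $\tilde f \le \tilde h \le \tilde g$, and strictly so where $\tilde f < \tilde g$, then yields the desired $h = \varphi^{-1} \circ \tilde h$.

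For the non-strict part, I apply Theorem~\ref{thm:michael} to the set-valued map $T(x) = [\tilde f(x), \tilde g(x)]$. It is closed, convex, nonempty, and lower semi-continuous: given $y_0 \in T(x_0)$ and $\varepsilon > 0$, the open set $\{x : \tilde f(x) < y_0 + \varepsilon\} \cap \{x : \tilde g(x) > y_0 - \varepsilon\}$ is a neighborhood of $x_0$ on which $T(x)$ meets $(y_0 - \varepsilon, y_0 + \varepsilon)$. Michael's theorem then supplies a continuous $\tilde h_0 : E \to F$ with $\tilde f \le \tilde h_0 \le \tilde g$.

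For the strict inequality, I set $A = \{x \in E : \tilde f(x) = \tilde g(x)\}$, which is closed in $E$ as the zero set of the non-negative LSC function $\tilde g - \tilde f$. The strategy is to apply Theorem~\ref{thm:michael} a second time to the shrunken interval $T^*(x) = [\tilde f(x) + \mu(x), \tilde g(x) - \mu(x)]$, where $\mu : E \to [0, \infty)$ is a definable continuous buffer satisfying $\mu^{-1}(0) = A$ and $2\mu \le \tilde g - \tilde f$ pointwise. With such a $\mu$ in hand, $T^*$ is closed, convex, nonempty, and lower semi-continuous (its lower bound is USC as the sum of a USC and a continuous function, its upper bound LSC analogously), and the resulting Michael selection $\tilde h$ satisfies $\tilde f + \mu \le \tilde h \le \tilde g - \mu$, yielding $\tilde f < \tilde h < \tilde g$ on $E \setminus A$ and $\tilde h = \tilde f = \tilde g$ on $A$.

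The main obstacle is constructing the buffer $\mu$: the natural upper bound $(\tilde g - \tilde f)/2$ is only LSC rather than continuous, so a direct Michael selection from $[0, (\tilde g - \tilde f)/2]$ may be identically zero and hence useless. By Lemma~\ref{lem:zeroset} I first obtain a continuous $\rho : E \to [0,1]$ with $\rho^{-1}(0) = A$. The plan is then to apply Theorem~\ref{thm:michael} to a two-dimensional set-valued map in $F^2$ encoding both constraints simultaneously — for instance the set of $(s, t) \in F^2$ with $\rho(x)/2 \le s \le \rho(x)$, $0 \le t \le (\tilde g(x) - \tilde f(x))/2$, and $s \le t$ — and to extract $\mu$ from the resulting continuous selection. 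A preliminary reduction, replacing $\rho$ by a continuous refinement bounded by the LSC gap $\tilde g - \tilde f$ (obtained via an additional Michael application to $[0, \rho]$ combined with Lemma~\ref{lem:zeroset2} to ensure positivity exactly on $E \setminus A$), is needed so that this set-valued map is nonempty and LSC everywhere; verifying this compatibility is the delicate technical step.
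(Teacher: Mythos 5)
Your reduction to bounded $\tilde f,\tilde g$ and the verification that $x\mapsto[\tilde f(x),\tilde g(x)]$ is lower semi-continuous are fine, but the plan has two problems, one repairable and one not (as written). The repairable one: Theorem \ref{thm:michael} as stated requires $E$ to be \emph{closed} in $F^n$, whereas Theorem \ref{thm:weak_Michael} does not assume this; since after your reduction the set-valued maps are bounded, the one place where closedness enters the proof of Theorem \ref{thm:michael} (the Tietze extension to all of $F^n$) can be replaced by a bounded Tietze extension over $E$, but you need to say so. The genuine gap is the construction of the buffer $\mu$, which is where all the content of the ``moreover'' clause lives. You correctly observe that a Michael selection from $[0,(\tilde g-\tilde f)/2]$ may be identically zero, and propose a two-dimensional set-valued map with constraints $\rho(x)/2\le s\le\rho(x)$ and $s\le t\le(\tilde g(x)-\tilde f(x))/2$. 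But this map has nonempty values only where $\rho\le\tilde g-\tilde f$, and the ``preliminary reduction'' you invoke to arrange this --- a definable continuous function that is positive exactly off $A$ and bounded above by the lower semi-continuous gap $\tilde g-\tilde f$ --- is precisely the function $\mu$ you are trying to build. Lemma \ref{lem:zeroset2} gives no control relative to $\tilde g-\tilde f$, and any Michael selection from an interval whose lower endpoint is $0$ and whose upper endpoint is merely lower semi-continuous can again be the zero function (indeed the least norm selection used in the paper's proof of Theorem \ref{thm:michael} \emph{is} the zero function there). So the argument is circular at its crucial step. (A smaller issue: you would also need to verify lower semi-continuity of the two-dimensional map; an intersection of constraints each of which defines a lower semi-continuous set-valued map need not be lower semi-continuous.)

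For comparison, the paper does not route the proof through Theorem \ref{thm:michael} at all: it argues by induction on the extended rank of $E$, using Corollary \ref{cor:dmin_cont2} to find an open set on which $f$ and $g$ are continuous, Theorem \ref{thm:bounded_tietze} to extend the selection obtained on the lower-rank remainder, and Lemmas \ref{lem:zeroset} and \ref{lem:zeroset2} to splice while preserving strictness. Some such generic-continuity/induction input appears unavoidable: producing a definable continuous positive minorant of a positive lower semi-continuous definable function is essentially an instance of the statement being proved, and it cannot be extracted from Michael's selection theorem alone.
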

\begin{proof}
	We reduce to simpler cases step by step.
	We first reduce to the case in which $f$ and $g$ are bounded.
	Let $\tau:F \to (-1,1)$ be the definable homeomorphism defined by $\tau(x)=\frac{x}{\sqrt{1+x^2}}$.
	We define the definable map $\widetilde{f}:E \to [-1,1]$ by 
	\begin{align*}
		\widetilde{f}(x) &= \left\{
			\begin{array}{cl}
				\tau(f(x)) & \text{ when } x \in F,\\
				1 & \text{ when } x= \infty,\\
				-1 & \text{ when } x= -\infty.
			\end{array}
		\right.
	\end{align*}
	Observe that the function $\widetilde{f}$ is upper semi-continuous.
	We define $\widetilde{g}$ in the same manner.
	We may assume that both $f$ and $g$ are bounded considering $\widetilde{f}$ and $\widetilde{g}$ instead of $f$ and $g$, respectively.
	
	Set $(d,\pi,r)=\myedim_n(E)$.
	We demonstrate the theorem by induction on $(d,\pi,r)$.
	When $d=0$ and $r=1$, $E$ is discrete.
	Both $f$ and $g$ are continuous in this case.
	We define $h:E \to F$ by $h(x)=(f(x)+g(x))/2$ in this case.
	It is obvious that $h$ is a definable continuous function satisfying the requirements.
	
	We consider the other case.
	There exists a definable open subset $U$ of $E$ such that $\myedim_n(E \setminus U)<\myedim_n E$ and both $f$ and $g$ are continuous on $U$ by Corollary \ref{cor:dmin_cont2}.
	By the induction hypothesis, there exists a definable continuous map $h_0:E \setminus U \to F$ such that $f \leq h_0 \leq g$ on $E \setminus U$ and $f(x)<h_0(x)<g(x)$ whenever $x \in E \setminus U$ and $f(x)<g(x)$.
	Note that $h_0$ is a bounded function because $f$ and $g$ are bounded.
	Apply Theorem \ref{thm:bounded_tietze} to $h_0$.
	There exists a definable bounded continuous extension $h_1: E \to F$ of $h_0$.
	We may assume that $h_1$ is identically zero on $E$ by considering $f-h_1$ and $g-h_1$ in place of $f$ and $g$, respectively, by Lemma \ref{lem:well_known_semicont}.
	We have reduced to the case in which 
	\begin{enumerate}
		\item[(i)] $U$ is a definable open subset of $E$ and $f$ and $g$ are continuous on $U$;
		\item[(ii)] $f \leq 0 \leq g$ on $E \setminus U$;
		\item[(iii)] $f(x)<0<g(x)$ whenever $x \in E \setminus U$ and $f(x)<g(x)$.
	\end{enumerate}
		
	Consider the definable subset $X:=\{x \in U\;|\; f(x)=g(x)\}$.
	The set $X$ is a closed subset of $U$ because $f$ and $g$ are continuous on $U$.
	Consider the definable function $h_2: (E \setminus U) \cup X \to F$ defined by 
	\begin{align*}
		h_2(x) &= \left\{\begin{array}{ll}
			0 & \text { when } x \in E \setminus U,\\
			f(x)(=g(x)) & \text{ on }X.
		\end{array}
		\right.
	\end{align*}
	The function $h_2$ is obviously continuous off $(E \setminus U) \cap \partial_E(X)$.
	Take an arbitrary point $x_0 \in (E \setminus U) \cap \partial_E(X)$.
	We have $f(x_0) \leq 0 \leq g(x_0)$ by condition (ii) because $x_0 \in E \setminus U$.
	Assume for contradiction that $f(x_0)<0$.
	We have $g(x_0)>0$ by condition (iii).
	By semi-continuity of $f$ and $g$, we have $f(x)<0<g(x)$ for arbitrary $x \in E$ sufficiently close to $x_0$.
	This contradicts the assumption that $x_0 \in (E \setminus U) \cap \partial_E(X)$.
	We have demonstrated that $f(x_0)=g(x_0)=0$.
	Take an arbitrary $\varepsilon >0$.
	There exists a definable neighborhood $N_1$ of $x_0$ in $X$ such that $f(x)<\varepsilon$ on $N_1$ because $f$ is upper semi-continuous.
	We can take a definable neighborhood $N_2$ of $x_0$ in $X$ such that $g(x)>-\varepsilon$ on $N_2$ because $g$ is lower semi-continuous.
	It is easy to check that $|h_2(x)|<\varepsilon$ on $(N_1 \cap N_2) \cup (E \setminus U)$, which is a neighborhood of $x_0$ in $ (E \setminus U) \cup X$.
	This implies that $h_2$ is continuous at $x_0$.
	Consequently, the definable function $h_2$ is continuous.
	Note that $(E \setminus U) \cup X$ is closed in $E$ because $X$ is closed in $U$ and $\partial_E(X)$ is contained in $E \setminus U$.
	Apply Theorem \ref{thm:bounded_tietze} to $h_2$.
	There exists a definable bounded continuous extension $h_3: E \to F$ of $h_2$.
	We may assume that $h_3$ is identically zero on $E$ by considering $f-h_3$ and $g-h_3$ in place of $f$ and $g$, respectively, by Lemma \ref{lem:well_known_semicont}.
	Set $V=U \setminus X$.
	We have finally reduced to the following case:
	\begin{enumerate}
		\item[(a)] $V$ is definable and open in $E$;
		\item[(b)]  $f$ and $g$ are continuous on $V$ and $f(x)<g(x)$ on $V$;
		\item[(c)] We have $f \leq 0 \leq g$ on $E \setminus V$ and $f(x)<0<g(x)$ whenever $x \in E \setminus V$ and $f(x)<g(x)$.
	\end{enumerate}
	
	Put $Z_1=\{x \in E\;|\; f(x) \geq 0\}$ and $Z_2=\{x \in E\;|\;g(x) \leq 0\}$.
	Note that $Z_1$ and $Z_2$ are closed in $E$ by semi-continuity of $f$ and $g$.
	We set $$Y=\{x \in E\;|\; f(x)<0<g(x)\}.$$
	The definable set $Y$ is open in $E$ because $f$ and $g$ are upper and lower semi-continuous, respectively.
	We have $Z_1 \cap Z_2 =\{x \in E\;|\; f(x)=g(x)=0\}$ because $f \leq g$ on $E$.
	We get $Z_1 \cap Z_2 \subseteq E \setminus V$ by condition (b).
	Therefore, we obtain a partition 
	\begin{equation}
	V= (Y \cap V) \cup (Z_1 \cap V) \cup (Z_2 \cap V) \label{eq:partiton}
	\end{equation}
	 of $V$.
	The definable sets $\partial_E(V) \cap Z_i$ are closed in $E$ for $i=1,2$.
	We can find nonnegative definable continuous functions $\eta_i:E \to F$ whose zero sets are $\partial_E(V) \cap Z_i$ by Lemma \ref{lem:zeroset}.	
	Set $$S=Z_1 \cup Z_2 \cup (E \setminus V).$$
	Let us consider the definable map $h_4: S \to F$ defined by
	\begin{align*}
		h_4(x) &= \left\{\begin{array}{cl}
			\min \{f(x)+\eta_1(x),(f(x)+g(x))/2\} & \text{ when } x \in Z_1 \cap V,\\
			\max \{g(x)-\eta_2(x),(f(x)+g(x))/2\} & \text{ when } x \in Z_2 \cap V,\\
			0 & \text{ when } x \in E \setminus V.
		\end{array}\right.
	\end{align*}
	Observe that 
	\begin{equation}
	f(x)<h_4(x)<g(x) \label{eq:ineq_h4}
	\end{equation}
	whenever $x \in V \cap (Z_1 \cup Z_2)$ because $f<g$ on $V$ by condition (b). 
	We have $h_4> 0$ on $Z_1 \cap V$ and $h_4<0$ on $Z_2 \cap V$ by inequality (\ref{eq:ineq_h4}).
	We want to show that $h_4$ is continuous.
	It is obvious that $h_4$ is continuous off $\partial_E(V) \cap (Z_1 \cup Z_2)$.
	Pick an arbitrary point $x_0 \in \partial_E(V) \cap (Z_1 \cup Z_2)$.
	We have only to prove that $h_4$ is continuous at $x_0$.
	Fix an arbitrary $\varepsilon>0$.
	When $x_0 \in Z_1$, we have $f(x_0) = 0$ by condition (c) because $x_0 \notin V$.
	We can take a definable open neighborhood $N_3$ of $x_0$ in $E$ such that $f<\varepsilon/2$ and $\eta_1< \varepsilon/2$ on $N_3$ because $f$ is upper semi-continuous and $\eta_1$ is continuous. 
	We set $N_3=X \setminus Z_1$ when $x_0 \notin Z_1$.
	We have constructed a definable open neighborhood $N_3$ of $x_0$ in $E$ such that $0 \leq h_4(x) = \min \{f(x)+\eta_1(x),(f(x)+g(x))/2\} \leq f(x)+\eta_1(x) <\varepsilon$ whenever $x \in N_3 \cap (Z_1 \cap V)$.
	We can construct a definable open neighborhood $N_4$ of $x_0$ in $E$ such that $0 \leq - h_4(x) <\varepsilon$ whenever $x \in N_4 \cap (Z_2 \cap V)$ in the same manner.
	It is obvious that $h_4(x)=0$ whenever $x \in N_3 \cap N_4 \cap (E \setminus V)$. 
	These inequalities imply that $h_4$ is continuous at $x_0$; and consequently, the definable function $h_4$ is continuous everywhere.
	
	Note that $S$ is closed in $E$.
	Observe that the partition of $V$ given in (\ref{eq:partiton}) implies the equality $Y \cap V=E \setminus S$.
	The definable function $h_4$ is bounded by inequality (\ref{eq:ineq_h4}) because $f$ and $g$ are bounded.
	Apply Theorem \ref{thm:bounded_tietze} to $h_4$.
	There exists a definable continuous extension $h_5$ of $h_4$ to $E$.  
	Set $$W=\{x \in V\;|\; h_5(x) \leq f(x) \text{ or } h_5(x) \geq g(x)\}.$$
	The inequalities $f(x)<h_5(x)<g(x)$ hold for every $x \in V \cap (Z_1 \cup Z_2)$ by inequality (\ref{eq:ineq_h4}).
	The definable set $W$ has an empty intersection with $V \cap (Z_1 \cup Z_2)$.
	There exists a definable continuous function $\delta:V \to [0,1]$ such that $\delta^{-1}(0)=W$ and $\delta^{-1}(1)=V \cap (Z_1 \cup Z_2)$ by Lemma \ref{lem:zeroset2}. 
	We define the definable function $h:E \to F$ by
	\begin{align*}
		h(x) &= \left\{ \begin{array}{cl} h_5(x) & \text{ when } x\in S;\\
		\delta(x)h_5(x) & \text{ when }x \in V.
		\end{array}\right.
	\end{align*} 
	It is a well-defined function because $\delta$ is identically one on $S \cap V=  V \cap (Z_1 \cup Z_2)$. 
	We first prove that $f \leq h \leq g$ on $E$ and $f(x)<h(x)<g(x)$ whenever $f(x)<g(x)$.
	These inequalities follow from condition (c) and inequality (\ref{eq:ineq_h4}) when $x \in S$.
	When $x \in Y \cap V \setminus W$, we have $f(x)<0<g(x)$ and $f(x) <h_5(x)<g(x)$.
	It is obvious that $f(x) <h(x)=\delta(x)h_5(x)<g(x)$ because $0 \leq \delta(x) \leq 1$.
	When $x \in Y \cap V \cap W$, we have $f(x)<0=h(x)<g(x)$ by the definition of $Y$.

	The remaining task is to show that $h$ is continuous.
	It is obvious that $h$ is continuous off $\partial_E(V) $.
	We prove that $h$ is continuous at each point $x_0 \in \partial_E(V) $.
	We have $h(x_0)=h_5(x_0)=h_4(x_0)=0$ because $x_0 \notin V$.
	Take a sufficiently small $\varepsilon>0$.
	We have $|h(x)| \leq |h_5(x)|<\varepsilon$ when $x$ is sufficiently close to $x_0$ because $h_5$ is continuous at $x_0$.
	This implies that $h$ is continuous at the point $x_0$.
\end{proof}

\section{Concluding remarks}\label{sec:conclusion}

We give several comments in this section.
\medskip

\textbf{\ref{sec:conclusion}.1.} 
A definable Michael's selection theorem is unavailable when the multiplication is not definable \cite{AT}.
\medskip

\textbf{\ref{sec:conclusion}.2.} 
We assume that $E$ is closed in Theorem \ref{thm:michael}.
We need this assumption so as to apply Theorem \ref{thm:bounded_tietze} for the case in which $X=F^n$.
We do not need this assumption when $T$ is bounded; that is, there exists $R>0$ such that $T(x) \subseteq (-R,R)^m$ for each $x \in E$.
Theorem \ref{thm:michael} also holds without the assumption that $E$ is closed when $\mathcal F$ is o-minimal.
We can use \cite[Chapter 8, Corollary 3.10]{D} instead of Theorem \ref{thm:bounded_tietze}.
Theorem \ref{thm:weak_Michael} does not assume that $E$ is closed, neither.
These examples illustrate that we can drop the assumption that $E$ is closed in several cases.
The following question arises naturally:
\begin{quote}
Does Theorem \ref{thm:michael} hold even when we drop the assumption that $E$ is closed?
\end{quote}
\medskip

\textbf{\ref{sec:conclusion}.3.} 
We gave a partial result on the following question in Theorem \ref{thm:weak_Michael}, but it is not solved completely:
\begin{quote}
	Can we choose a definable selection $f$ so that $f(x)$ is contained in the interior of $T(x)$ when $T(x)$ has a nonempty interior in Theorem \ref{thm:michael}?
\end{quote}

\end{document}